\documentclass[preprint,review,10pt]{elsarticle}

\usepackage{amsmath,amssymb,amsfonts,amsthm}
\usepackage{hyperref}
\usepackage{graphicx}
\usepackage{mathrsfs}

\usepackage{float}
\usepackage{color}
\usepackage{cases}

\usepackage{pifont}
\usepackage{latexsym}
\usepackage{mathrsfs}
\usepackage{dsfont}
\usepackage{eufrak}
\usepackage{multirow}

\usepackage{graphicx}
\usepackage{hyperref}
\usepackage{color}
\usepackage{diagbox}

\usepackage{enumerate}
\usepackage{cases}
\usepackage{xcolor}
\pagestyle{plain}
\theoremstyle{thmstyleone}%
\newtheorem{theorem}{Theorem}
%
\usepackage[justification=centering]{caption}

\theoremstyle{thmstyletwo}%
\newtheorem{remark}{Remark}%

\theoremstyle{thmstylethree}%
\newtheorem{corollary}{Corollary}
\newtheorem{assumption}{Assumption}
\newtheorem{lemma}{Lemma}
\raggedbottom

\let\mr=\mathrm

\begin{document}
\begin{frontmatter}

\title{Supercloseness of the NIPG method for a singularly perturbed convection diffusion problem on Shishkin mesh\tnoteref{funding} }

\tnotetext[funding]{
This research is supported by National Natural Science Foundation of China (11771257, 11601251), Shandong Provincial Natural Science Foundation, China (ZR2021MA004).
}

\author[label1] {Jin Zhang\corref{cor1}}
\author[label1] {Xiaoqi Ma \fnref{cor2}}
\cortext[cor1] {Corresponding author: jinzhangalex@hotmail.com }
\fntext[cor2] {Email: xiaoqiMa@hotmail.com }
\address[label1]{School of Mathematics and Statistics, Shandong Normal University, Jinan 250014, China}

\begin{abstract}

In this paper, we analyze the supercloseness result of nonsymmetric interior penalty Galerkin (NIPG) method on Shishkin mesh for a singularly perturbed convection diffusion problem. According to the characteristics of the solution and the scheme, a new analysis is proposed. More specifically, Gau{\ss} Lobatto interpolation and Gau{\ss} Radau interpolation are introduced inside and outside the layer, respectively. By selecting special penalty parameters at different mesh points, we further establish  supercloseness of almost $k+1$ order under the energy norm. Here $k$ is the order of piecewise polynomials. 
Then, a simple post-processing operator is constructed. In particular, a new analysis  is proposed for the stability analysis of this operator. On the basis of that, we prove that the corresponding post-processing can make the numerical solution achieve higher accuracy. Finally,  superconvergence can be derived under the discrete energy norm.
These theoretical conclusions can be verified numerically.
\end{abstract}

\begin{keyword}
Singular perturbation, Convection diffusion, NIPG method, Shishkin mesh, Supercloseness, Superconvergence
\end{keyword}
\end{frontmatter}
%

\section{Introduction}

We consider the following singularly perturbed convection diffusion problem:
\begin{equation}\label{eq:S-1}
\begin{aligned}
& Lu:=-\varepsilon u''(x)+a(x)u'(x)+b(x)u(x)=f(x) \quad \text{$x \in \Omega:= (0,1)$},\\
& u(0)= u(1)=0,
\end{aligned}
\end{equation}
where the perturbation parameter $\varepsilon$ satisfies $0<\varepsilon \ll 1$, and the convection coefficient $a(x)\ge\beta>0$. Without loss of generality, for all $x\in\Omega$, we make an assumption that
\begin{equation}\label{eq:SPP-condition-1}
 b(x)-\frac{1}{2}a'(x)\ge \gamma>0
\end{equation}
with some fixed constant $\gamma$. Here $a(x)$, $b(x)$ and $f(x)$ are sufficiently smooth functions. Because of the presence of the parameter $\varepsilon$,  the solution of \eqref{eq:S-1} usually produces a boundary layer of width $\mathcal{O}(\varepsilon \ln (1/\varepsilon) )$ at $x= 1$. In other words, the exact solution to this problem varies dramatically in the vicinity of $x=1$. Hence, classical numerical methods may no longer be suitable. To deal with this difficulty, researchers have designed an efficient method---layer-adapted mesh, which selects finer meshes inside the layer while general meshes outside the layer. The specific details can be found in \cite{Far1Heg2Mil3:2000-R, Lin1:2003-L, Mil1OPi2Shi3:1997-F, Roo1Sty2Tob3:2008-R,Zha1Ma2Yan3:2021-F}. It is worth noting that in \cite{Lin1Sty2:2001-N}, Lin{\ss} and Stynes pointed out that the standard Galerkin method may be unstable even on the layer adapted mesh. This requires us to consider the corresponding stabilization technology in combination with the layer-adapted mesh.

With the in-depth study of singularly perturbed convection diffusion problems, there appear many stabilization methods, one of which is the nonsymmetric interior penalty Galerkin (NIPG) method. It was first introduced \cite{Wh21:1978-motified} in 1978, that employs penalty parameters to solve the discontinuity of finite element functions across the element interface, and allows the construction of high-order schemes in a natural way to approximate the exact solution. Therefore, it is more flexible than the conforming finite element method. Furthermore, compared with other variants of DG method, such as the symmetric interior penalty Galerkin (SIPG) method and incomplete interior penalty Galerkin (IIPG) method, this method is stable and convergent for any nonnegative penalty parameters. In short, no matter how the nonnegative penalty parameters are selected, the NIPG method satisfies coercivity on any mesh. 
In view of its own advantages, the NIPG method has attracted more and more researchers' attention.
Firstly, Roos and Zarin studied the NIPG method for two-dimensional singularly perturbed problems with different boundary layers on a Shishkin mesh with bilinear elements in \cite{Roo1Zar2-2003:T} and \cite{Zar1Roo2:2005-I} respectively, and proved the uniform error estimate with respect to the singular perturbation parameter under an associated norm of this method. 
Then for a two-dimensional singularly perturbed problem, Roos and Zarin discussed the supercloseness of a combination of the NIPG and the finite element method on Shishkin mesh with bilinear elements \cite{Roo1Zar2:2007-motified}. In their work, the region is divided into coarse and fine parts, and then the finite element method is employed in the fine  part of the mesh, while the NIPG method with bilinear elements is used in the coarse part. Through using the local $L^{2}$-projection operator and the nodal bilinear interpolating function in the coarse and fine parts separately, the supercloseness result of almost $\frac{3}{2}$ order is obtained. 
In addition, for a singularly perturbed convection diffusion problem, a high-order NIPG method based on Shishkin-type meshes was analyzed in \cite{Zhy1Yan2Yin3:2015-H}. It turns out that the method has a uniform convergence of almost $k$ order under the associated norm. In particular, numerical experiments there show that the NIPG method has the supercloseness of $k+1$ order in the energy norm when $k$ is odd. However, the corresponding theoretical analysis is unavailable.
This is because it is difficult to analyze it with conventional methods. Therefore, so far, the theoretical system of supercloseness and superconvergence of NIPG method for singularly perturbed problems has not been completely established.

This paper is the first one, which determines the main analysis difficulties of the NIPG method for a singularly perturbed convection diffusion problem---the convergence analysis of the convection term outside the layer and diffusion term in the layer. On this basis, the idea of applying a special interpolation to discuss the supercloseness of NIPG method on Shishkin mesh is proposed. Briefly speaking, for convergence analysis of convection term, outside the layer, the interpolation is defined as Gau{\ss} Radau interpolation; while for the diffusion term, in the layer, we use Gau{\ss} Lobatto interpolation. Then the uniform supercloseness of almost $k+1$ order is obtained under the NIPG norm. In this process, the specific values of penalty parameters at different mesh points can be derived. Furthermore, combined with this supercloseness, we show how to improve the accuracy of the numerical solution by constructing the post-processing operator $R$. Especially, a new method is introduced for the stability analysis of the operator. Finally, we prove the uniform superconvergence under the discrete energy norm.

The basic framework of this paper is organized as follows: In Section 2, we present \emph{a priori} information of the solution. In addition, the Shishkin mesh suitable for the problem is described, and on the basis of that, the NIPG method for the singularly perturbed problem is introduced. Then by defining a special interpolation, we obtain the supercloseness under the relevant energy norm in Section 3. In Section 4, a post-processing operator is constructed, the related properties of this operator are analyzed, and it is proved that the corresponding post-processing can be used to improve the accuracy of the numerical solution. In Section 5, we prove the uniform superconvergence under the discrete energy norm. At last, theoretical conclusion can be verified numerically in Section 6.

Throughout the paper, $k\ge 1$ is some fixed integer and $C$ denotes a generic positive constant that is independent of $\varepsilon$ and the mesh parameter $N$.
\section{Regularity, Shishkin mesh and the NIPG method}\label{sec:mesh,method}
\subsection{Regularity}
Now, we will present some prior information about the solution, which plays a critical role in the following analysis.
\begin{theorem}
The exact solution $u$ of problem \eqref{eq:S-1} can be decomposed as $u = S + E$, in which the smooth part $S$ and the layer part $E$ satisfy $LS=f$ and $LE=0$, respectively. Then for any positive integer $p$, one has
\begin{equation}\label{eq:decomposition}
\begin{aligned}
&\vert S^{(m)}(x)\vert\le C,\quad
&\vert E^{(m)}(x)\vert\le C\varepsilon^{-m}e^{-\beta(1-x)/\varepsilon},\quad 0\le m\le p,
\end{aligned}
\end{equation}
where $p$ depends on the regularity of the coefficients. In particular, \eqref{eq:decomposition} holds for any $p\in \mathcal{N}$ if $b, c, f\in C^{\infty}(\Omega)$.
\end{theorem}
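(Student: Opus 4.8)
This is the standard solution decomposition for singularly perturbed convection--diffusion problems, and the plan is to build $S$ from a truncated asymptotic expansion, set $E:=u-S$, and control every derivative by barrier (maximum-principle) comparisons together with a bootstrap on the order of differentiation.

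\textbf{Step 1 (smooth part).} Let $S_{0}$ solve the reduced problem $aS_{0}'+bS_{0}=f$ on $[0,1]$ with $S_{0}(0)=0$, and for $1\le j\le p$ define $S_{j}$ recursively by $aS_{j}'+bS_{j}=S_{j-1}''$, $S_{j}(0)=0$. Since $a\ge\beta>0$ and the coefficients are as smooth as needed, each $S_{j}$ is smooth with derivative bounds independent of $\varepsilon$. Put $S_{p}^{*}:=\sum_{j=0}^{p}\varepsilon^{j}S_{j}$; a short telescoping computation gives $LS_{p}^{*}=f-\varepsilon^{p+1}S_{p}''$. I then \emph{define} $S$ to be the solution of the full problem $LS=f$ on $\Omega$ with $S(0)=0$ and $S(1)=S_{p}^{*}(1)$, so that $LS=f$ holds exactly as the statement demands. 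The remainder $r:=S-S_{p}^{*}$ satisfies $Lr=\varepsilon^{p+1}S_{p}''$ with homogeneous boundary data, so the maximum principle for $L$ (available under \eqref{eq:SPP-condition-1}, after the harmless substitution $u=e^{\lambda x}v$ that renders the zero-order coefficient positive) gives $\|r\|_{\infty}\le C\varepsilon^{p+1}$; differentiating the equation for $r$ and iterating the barrier estimate yields $|r^{(m)}(x)|\le C\varepsilon^{p+1-m}$ for $0\le m\le p$. Combined with the $\varepsilon$-uniform bounds on the $S_{j}$, this gives $|S^{(m)}(x)|\le C$ for $0\le m\le p$.

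\textbf{Step 2 (layer part).} Set $E:=u-S$, so that $LE=Lu-LS=f-f=0$ automatically, while $E(0)=0$ and $|E(1)|=|S(1)|\le C$. For the pointwise bound take the barrier $B(x):=Ce^{-\beta(1-x)/\varepsilon}$; since $B'=(\beta/\varepsilon)B$ one computes $LB=\varepsilon^{-1}\beta\,(a(x)-\beta)\,B+b(x)B\ge 0$ (again after the exponential substitution if $b$ fails to be nonnegative), so $L(B\pm E)\ge 0$ in $\Omega$ and $B\pm E\ge 0$ on $\partial\Omega$, whence $|E(x)|\le Ce^{-\beta(1-x)/\varepsilon}$ by comparison. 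The weighted derivative estimates $|E^{(m)}(x)|\le C\varepsilon^{-m}e^{-\beta(1-x)/\varepsilon}$ then follow by induction on $m$: from $\varepsilon E''=aE'+bE$ one bounds $E'$ (by integrating the first-order ODE it satisfies, or by applying the barrier argument on subintervals), and then bootstraps to higher derivatives by differentiating $LE=0$ and reabsorbing the lower-order terms.

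The routine ingredients are the asymptotic expansion, the identity $LS_{p}^{*}=f-\varepsilon^{p+1}S_{p}''$, and the maximum-principle comparisons. The only genuine obstacle is the bootstrap in Step 2: passing from $\|E\|_{\infty}\le Ce^{-\beta(1-x)/\varepsilon}$ to the sharp $\varepsilon^{-m}$-weighted bounds on $E^{(m)}$ requires carefully tracking the exponential weight through each differentiation while keeping the constants independent of $\varepsilon$ and $N$; this is precisely where the smoothness hypothesis on the data is consumed and where the admissible range of $p$ is fixed. Since the result is standard, one may alternatively just invoke the corresponding statement from the references cited in the introduction (see, e.g., \cite{Lin1:2003-L,Roo1Sty2Tob3:2008-R}).
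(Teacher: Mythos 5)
Your proposal is correct, and it is essentially the argument that underlies the paper's one-line proof: the paper does not prove this theorem at all, it simply cites \cite{Roo1Sty2Tob3:2008-R}, whereas you reconstruct the standard derivation behind that citation (truncated regular asymptotic expansion for $S$, maximum-principle/barrier comparison for the remainder, $E:=u-S$ with the exponential barrier $Ce^{-\beta(1-x)/\varepsilon}$, and a bootstrap on $LE=0$ for the weighted derivative bounds). Your closing remark that one may simply invoke the references is exactly what the paper does, so in substance the two routes coincide; yours just makes the machinery explicit. Two small points worth tightening if you were to write this out in full: (i) the remainder $r=S-S_p^*$ generally carries a weak layer component, so the clean bound $|r^{(m)}|\le C\varepsilon^{p+1-m}$ should really be stated as $|r^{(m)}(x)|\le C\varepsilon^{p+1}\bigl(1+\varepsilon^{-m}e^{-\beta(1-x)/\varepsilon}\bigr)$, which still yields $|S^{(m)}|\le C$ for $m\le p$; and (ii) when $b$ can be negative, the inequality $LB\ge 0$ for the barrier with decay rate exactly $\beta$ is delicate at points where $a(x)=\beta$, so the exponential substitution (or a slightly reduced decay rate, subsequently recovered) has to be handled with care to keep the stated exponent $\beta$. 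Neither issue invalidates the approach; both are resolved in the cited reference.
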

\begin{proof}
We can draw this conclusion by directly referring to \cite{Roo1Sty2Tob3:2008-R}.
\end{proof}

\subsection{Shishkin mesh}
As mentioned above, the solution of problem \eqref{eq:S-1} exists a boundary layer at $x=1$, where the solution changes rapidly. In order to resolve this layer, we adopt a piecewise uniform mesh---Shishkin mesh. 
Furthermore, the set of mesh points  $\Omega_{N}=\{x_{i}\in \Omega : i=0, 1, \cdots, N\}$ is constructed. Then suppose that 
\begin{equation*}
\mathcal{T}_{N} =\{{I_{i} = [x_{i-1}, x_{i}]: i = 1,\cdots, N}\},
\end{equation*}
is a partition of the domain $\Omega$. Here $h_{i}=x_{i}-x_{i-1}$ represents the length of the interval $I_{i}$ and the general interval is denoted as $I$.

For convenience, the domain $\bar{\Omega}$ is subdivided into two subdomains as $\bar{\Omega}= [0, 1-\tau]\cup [1-\tau, 1]$. Here the transition point $1-\tau$ is defined by
$1-\tau=1-\frac{\sigma \varepsilon}{\beta} \ln N$, which satisfies $\tau \le 1/2$.  Let $N\in \mathbb{N}$, $N\ge 4$, be divisible by $2$ and $\sigma\ge k+1$. Each subdomain is divided into $N/2$ equidistant mesh intervals. Therefore, Shishkin mesh points can be represented as
\begin{equation}\label{eq:Shishkin mesh-Roos}
x_{i}=
\left\{
\begin{aligned}
&\frac{2(1-\tau)}{N}i\quad &&\text{for $i=0,1,...,N/2$},\\
& 1-\tau+\frac{2\tau}{N}(i-\frac{N}{2})&&\text{for $i=N/2+1,...,N$}.
\end{aligned}
\right.
\end{equation}
Clearly $x_{N/2}=1-\tau$. Specially, this mesh becomes a uniform mesh when $1-\tau= 1/2$. 

\begin{assumption}\label{ass:S-1}
We shall assume throughout the paper that
\begin{equation*}
\varepsilon \le C N^{-1},
\end{equation*}
and in practice, it is not a restriction.
\end{assumption}

\subsection{The NIPG method}
Below we provide some basic concepts. For each element $I_{i}, i=1, 2, \cdots, N$, the broken Sobolev space of order $s$, where $s$ is a nonnegative integer, is denoted as 
\begin{equation*}
H^{s}(\Omega, \mathcal{T}_{N}) = \{v\in L^{2}(\Omega): v\vert_{I_{i}}\in H^{s}(I_{i}), \forall i=1, 2,\cdots, N\}.
\end{equation*}
At the same time, we present the corresponding norm and seminorm,
\begin{equation*}
\Vert v\Vert^{2}_{s,\mathcal{T}_{N}}=\sum_{i=1}^{N} \Vert v\Vert^{2}_{s,I_{i}},\quad \vert v\vert^{2}_{s,\mathcal{T}_{N}}= \sum_{i=1}^{N}\vert v\vert^{2}_{s,I_ {i}},
\end{equation*}
where $\Vert\cdot\Vert_{s, I_i}$ and $\vert\cdot\vert_{s, I_i}$ are the usual Sobolev norm and semi-norm in $H^s(I_i)$, respectively. 
It is worth noting that $\Vert\cdot\Vert_{I}$ and $(\cdot, \cdot)_{I}$ are usually used to stand for the $L^{2}(I)$-norm and the $L^{2}(I)$-inner product, respectively. Then on Shishkin mesh, the finite element space can be written as
\begin{equation*}
V_{N}^{k}=\{w\in L^{2}(\Omega): w\vert_{I_{i}}\in \mathcal{P}_{k}(I_{i}),\quad \forall i=1, 2, \cdots, N\},
\end{equation*}
where $\mathcal{P}_{k}(I_{i})$ represents the space of polynomials of degree at most $k$ on $I_{i}$. The functions in $V_{N}^{k}$ are completely discontinuous at the boundaries between two adjacent cells.

Since discontinuities are allowed, the function $w\in V_{N}^{k}$ is multivalued at the node. For a function $u\in H^{1}(\Omega, \mathcal{T}_{N})$, the jump and average at the interior node $x_{i}$ are defined by
\begin{equation*}
[u(x_{i})]=u(x_{i}^{-})-u(x_{i}^{+}),\quad \{u(x_{i})\}=\frac{1}{2}\left(u(x_{i}^{+})+u(x_{i}^{-})\right),\quad \forall i=1, \cdots, N-1,
\end{equation*}
with $u(x_{i}^{+})= \lim\limits_{x\rightarrow x_{i}^{+}}u(x)$ and $u(x_{i}^{-})=\lim\limits_{x\rightarrow x_{i}^{-}}u(x)$.
By convention, we extend the definition of jump and average at the boundary nodes $x_{0}$ and $x_{N}$ as
\begin{equation*}
[u(x_{0})]=-u(x_{0}^{+}),\quad \{u(x_{0})\}=u(x_{0}^{+}),\quad [u(x_{N})]=u(x_{N}^{-}),\quad \{u(x_{N})\}=u(x_{N}^{-}).
\end{equation*}

Now the weak formulation of \eqref{eq:S-1} reads as: Find $u_{N} \in V_{N}^{k}$ to satisfy
\begin{equation}\label{eq:SD}
B(u_{N},v_{N})=L(v_{N}) \quad \forall v_{N} \in V_{N}^{k},
\end{equation}
where
\begin{equation*}
\begin{aligned}
&B(u,v)=B_{1}(u,v)+B_{2}(u,v)+B_{3}(u,v),\\
&B_{1}(u,v)=\sum_{i=1}^{N}\int_{I_{i}}\varepsilon u'v'\mr{d}x-\varepsilon \sum_{i=0}^{N}\{u'(x_{i})\}[v(x_{i})]+\varepsilon \sum_{i=0}^{N} \{v'(x_{i})\}[u(x_{i})]+\sum_{i=0}^{N} \rho(x_{i})[u(x_{i})][v(x_{i})],\\
&B_{2}(u,v)=\sum_{i=1}^{N}\int_{I_{i}}a(x) u'v\mr{d}x-\sum_{i=0}^{N-1}a(x_{i})[u(x_{i})]v(x_{i}^{+}),\\
&B_{3}(u,v)=\sum_{i=1}^{N}\int_{I_{i}}b(x)uv\mr{d}x,\\
&L(v)=\sum_{i=1}^{N}\int_{I_{i}}f v\mr{d}x.
\end{aligned}
\end{equation*}
It is worth noting that the penalty  parameters $\rho(x_{i}) (i=0, \cdots, N)$ are nonnegative constants associated with the node $x_{i}$. In the following, we will provide the exact values of $\rho(x_{i})$ as
\begin{equation}\label{eq: penalization parameters}
\rho(x_{i})=\left\{
\begin{aligned}
&1,\quad 0\le i\le N/2-1,\\
&N^{2},\quad N/2\le i\le N.
\end{aligned}
\right.
\end{equation}

\begin{lemma}\label{Galerkin orthogonality property}
Suppose that $u$ is the solution of problem \eqref{eq:S-1}, then the bilinear form $B(\cdot,\cdot)$ defined in \eqref{eq:SD} has the Galerkin orthogonality property
\begin{equation*}
B(u-u_{N}, v)=0,\quad \forall v\in V_{N}^{k}.
\end{equation*}
\end{lemma}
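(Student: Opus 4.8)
The plan is to show that the exact solution $u$ of \eqref{eq:S-1} itself satisfies the discrete variational identity $B(u,v)=L(v)$ for every $v\in V_{N}^{k}$; subtracting \eqref{eq:SD} and using bilinearity of $B(\cdot,\cdot)$ then gives $B(u-u_{N},v)=0$ at once. The first step is to record the regularity input: by the decomposition \eqref{eq:decomposition} with $p\ge 2$, the solution $u$ is globally $C^{1}(\bar\Omega)$ and lies in $H^{2}(I_{i})$ on each element, so all the pointwise traces $u(x_{i}^{\pm})$ and $u'(x_{i}^{\pm})$ entering $B(\cdot,\cdot)$ are well defined, and moreover $u(x_{i}^{-})=u(x_{i}^{+})$, $u'(x_{i}^{-})=u'(x_{i}^{+})$ at every interior node. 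Together with the boundary conditions $u(0)=u(1)=0$ and the stated conventions for jumps/averages at $x_{0}$ and $x_{N}$, this yields $[u(x_{i})]=0$ for all $i=0,1,\dots,N$.

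Next I would substitute $u$ into $B(\cdot,\cdot)$ and simplify term by term. In $B_{1}(u,v)$ the coupling term $\varepsilon\sum_{i=0}^{N}\{v'(x_{i})\}[u(x_{i})]$ and the penalty term $\sum_{i=0}^{N}\rho(x_{i})[u(x_{i})][v(x_{i})]$ vanish because $[u(x_{i})]=0$; in $B_{2}(u,v)$ the term $\sum_{i=0}^{N-1}a(x_{i})[u(x_{i})]v(x_{i}^{+})$ vanishes for the same reason. Hence $B(u,v)$ reduces to
\[
\sum_{i=1}^{N}\int_{I_{i}}\varepsilon u'v'\,\mr{d}x-\varepsilon\sum_{i=0}^{N}\{u'(x_{i})\}[v(x_{i})]+\sum_{i=1}^{N}\int_{I_{i}}a(x)u'v\,\mr{d}x+\sum_{i=1}^{N}\int_{I_{i}}b(x)uv\,\mr{d}x.
\]
The key computation is elementwise integration by parts of the first sum, $\int_{I_{i}}\varepsilon u'v'\,\mr{d}x=\varepsilon u'(x_{i}^{-})v(x_{i}^{-})-\varepsilon u'(x_{i-1}^{+})v(x_{i-1}^{+})-\int_{I_{i}}\varepsilon u''v\,\mr{d}x$. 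Summing over $i$ and regrouping the boundary contributions node by node, using continuity $u'(x_{i}^{-})=u'(x_{i}^{+})=\{u'(x_{i})\}$ at interior nodes and the boundary conventions $\{u'(x_{0})\}=u'(x_{0}^{+})$, $\{u'(x_{N})\}=u'(x_{N}^{-})$, $[v(x_{0})]=-v(x_{0}^{+})$, $[v(x_{N})]=v(x_{N}^{-})$, one obtains exactly $\varepsilon\sum_{i=0}^{N}\{u'(x_{i})\}[v(x_{i})]-\sum_{i=1}^{N}\int_{I_{i}}\varepsilon u''v\,\mr{d}x$. This cancels the coupling term in $B_{1}$, leaving $B(u,v)=\sum_{i=1}^{N}\int_{I_{i}}(-\varepsilon u''+a u'+b u)v\,\mr{d}x=\sum_{i=1}^{N}\int_{I_{i}}fv\,\mr{d}x=L(v)$, since $u$ solves \eqref{eq:S-1} pointwise on each $I_{i}$.

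Finally, because $u_{N}$ satisfies $B(u_{N},v)=L(v)$ for all $v\in V_{N}^{k}$ by definition \eqref{eq:SD}, subtraction gives $B(u-u_{N},v)=0$ for all $v\in V_{N}^{k}$. There is no genuine obstacle here; the only points requiring a little care are the bookkeeping of the jump/average conventions at the two boundary nodes $x_{0}$ and $x_{N}$ when regrouping the integration-by-parts boundary terms, and the mild regularity $u\in C^{1}(\bar\Omega)$ with $u|_{I_{i}}\in H^{2}(I_{i})$ needed to justify the pointwise traces and the integration by parts, both of which are furnished by the regularity theorem above.
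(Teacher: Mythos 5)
Your proof is correct: the consistency computation ($[u(x_i)]=0$ by continuity and the boundary conditions, elementwise integration by parts regrouped node by node to cancel the $-\varepsilon\sum_{i}\{u'(x_i)\}[v(x_i)]$ term, then subtraction of \eqref{eq:SD}) is the standard argument, and your bookkeeping of the jump/average conventions at $x_0$ and $x_N$ is accurate. The paper itself does not write this out but simply cites \cite{Zhy1Yan2Yin3:2015-H}, where essentially the same consistency argument is carried out, so your proposal matches the intended proof while supplying the details the paper omits.
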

\begin{proof}
From \cite{Zhy1Yan2Yin3:2015-H}, the proof of this lemma can be obtained directly.
\end{proof}

Then we define the energy norm associated with $B(\cdot,\cdot)$ by
\begin{equation}\label{eq:SS-1}
\Vert v \Vert_{NIPG}^{2}:=
\varepsilon \sum_{i=1}^{N}\Vert v'\Vert_{I_{i}}^{2}+\sum_{i=1}^{N}\gamma\Vert v \Vert_{I_{i}}^{2}+\sum_{i=0}^{N}\rho(x_{i})[v(x_{i})]^{2} \quad \forall v\in V_{N}^{k}.
\end{equation}
From \eqref{eq:SPP-condition-1}, some direct calculations show that
\begin{equation}\label{eq:coercity}
B(v_N,v_N) \ge \Vert v_N \Vert_{NIPG}^2\quad \text{for all $v_N\in V_{N}^{k}$},
\end{equation}
which follows that $u_{N}$ is well defined by \eqref{eq:SD}.
%
%
%

\section{Interpolation and supercloseness}

\subsection{Interpolation}
In this section, a special interpolation $\Pi u$ of the exact solution $u$ is introduced, to be brief, 
\begin{equation}\label{eq:H-1}
(\Pi u)\vert_{I}=\left\{
\begin{aligned}
& (I_{k}u)\vert_I, \quad \text{if $I\subset [x_{N/2}, 1]$},\\
& (P^{-}_{h}u)\vert_I,\quad \text{if $I\subset [0, x_{N/2}]$},
\end{aligned}
\right.
\end{equation}
where $P^{-}_{h}u$ is the Gau{\ss} Radau interpolation of $u$ in the finite element space, while $I_{k}u$ represents the   Lagrange interpolation of order $k$ to $u$ at Gau{\ss} Lobatto points. In the subsequent, we will present the definitions and corresponding properties of both interpolations. 

On the one hand, on $I_{i}=[x_{i-1}, x_{i}]$, we suppose that $x_{i-1}= t_{0}< t_{1}< \cdots < t_{k}=x_{i}$ is the Gau{\ss} Lobatto points, where $t_{1}, t_{2}, \cdots, t_{k-1}$ are zeros of the derivative of the Legendre polynomial of degree $k$ on $I_{i}$. For $v\in C(\overline{\Omega})$, $(I_{k} v)\vert_{I_i}$ for $i=1,\ldots,N$ is the $k$-degree Lagrange interpolation polynomial with the Gau{\ss} Lobatto points $\{t_{s}\}_{s=0}^{k}$ as the interpolation points.
Then from the Bramble-Hilbert Lemma \citep[Chapter 4]{cia1:2002-modified} and a scaling argument, we derive 
\begin{equation}\label{eq: Gauss-Lobatto-1}
\vert(z'-I_{k} z', v')_{I_{i}}\vert\le Ch_{i}^{k+1}\vert z\vert_{k+2, I_{i}}\vert v\vert_{1, I_{i}}\quad\forall v\in \mathcal{P}_{k},
\end{equation}
with $z(x)\in H^{k+2}(I_{i})$; see \cite{Zha1:2002-F} for details.

On the other hand, when $k\ge 1$,  Gau{\ss} Radau interpolation $P^{-}_{h}u\in V_{h}^{k}$ is  defined by (see \cite{Che1Shu2:2010-S}): For $i=1, 2, \cdots, N$, 
\begin{align}
&\int_{I_i}(P^{-}_{h}u)v_{h}\mr{d}x=\int_{I_i}uv_{h}\mr{d}x,\quad \forall v_{h}\in \mathcal{P}_{k-1},\label{eq:J-1}\\
&(P^{-}_{h}u)(x_{i}^{-})=u(x_{i}^{-}).\label{eq:J-2}
\end{align}

\begin{remark}\label{special}
Let's analyze the case at the point $x_{N/2}$. For one thing, at the left side of $x_{N/2}$, we use Gau{\ss} Radau interpolation. From \eqref{eq:J-2},
\begin{equation}\label{eq: Kk-11}
(u-P_{h}^{-} u)(x_{N/2}^{-})=0
\end{equation}
can be obtained. For another, at the right side of $x_{N/2}$, from the property of Gau{\ss} Lobatto interpolation, one has
\begin{equation}\label{eq: Kk-12}
(u-I_{k} u)(x_{N/2}^{+})=0.
\end{equation}
Then, combined with \eqref{eq:H-1}, \eqref{eq: Kk-11} and \eqref{eq: Kk-12}, at the point of $x_{N/2}$, there is
\begin{equation*}
[(u-\Pi u)(x_{N/2})]=0.
\end{equation*}
\end{remark}

\subsection{Supercloseness}
Recall that $I_{k}$ is the Lagrange interpolation operator with Gau{\ss} Lobatto points as the interpolation nodes. 
According to the interpolation theories in Sobolev spaces \citep[Theorem 3.1.4]{cia1:2002-modified}, there is
\begin{equation}\label{eq:interpolation-theory}
\Vert \omega-I_{k}\omega \Vert_{W^{l,q}(I_i)}\le C h_i^{k+1-l+1/q-1/p}\vert \omega \vert_{W^{k+1,p}(I_i)}, 
\end{equation}
for all $\omega\in W^{k+1,p}(I_i)$,  where $l=0,1$ and $1\le p,q\le \infty$.   
Recall that $P_{h}^{-}\omega$ is Gau{\ss} Radau interpolation to $\omega$. Then from the projection results \cite{cia1:2002-modified}, we have
\begin{equation}\label{eq:interpolation-theory-1}
\Vert \omega-P_{h}^{-}\omega \Vert_{ I_i}+h_{i}^{\frac{1}{2}}\Vert \omega-P_{h}^{-}\omega \Vert_{L^{\infty}(I_i)}\le C h_i^{k+1}\vert \omega \vert_{k+1, I_i}, \quad i=1, 2, \cdots, N
\end{equation}
for all $\omega\in H^{k+1}(I_{i})$. And on that basis, we have the following lemma.

\begin{lemma}
Let Assumption \ref{ass:S-1} hold and $\sigma\ge k+1$. Recall $\rho(x_{i}), i=0, 1, \cdots, N$ have been presented in \eqref{eq: penalization parameters}. Then on Shishkin mesh \eqref{eq:Shishkin mesh-Roos}, one has
\begin{align}
&\Vert S-P_{h}^{-}S\Vert_{[0, x_{N/2}]}+\Vert u-\Pi u\Vert_{[0, x_{N/2}]}\le CN^{-(k+1)},\label{eq:QQ-1}\\
&\Vert E-I_{k}E\Vert_{[x_{N/2}, 1]}+\Vert u-\Pi u\Vert_{[x_{N/2}, 1]}\le C\varepsilon^{\frac{1}{2}}(N^{-1}\ln N)^{k+1},\label{eq:QQ-2}\\
&\Vert (u-\Pi u)'\Vert_{[0, x_{N/2}]}\le CN^{-k}+C\varepsilon^{-\frac{1}{2}}N^{-\sigma}+CN^{1-\sigma},\label{eq:QQ-3}\\
&\Vert u-\Pi u\Vert_{L^{\infty}(I_{i})}\le CN^{-(k+1)},\quad i=1, 2, \cdots, N/2\label{eq:QQ-5}\\
&\Vert u-\Pi u\Vert_{L^{\infty}(I_{i})}\le C(N^{-1}\ln N)^{k+1},\quad i=N/2+1, \cdots, N\label{eq:QQ-6},\\
&\Vert I_{k}u-u\Vert_{NIPG, [0, x_{N/2}]}\le C\varepsilon^{\frac{1}{2}}N^{-k}+CN^{-(k+1)}+C\varepsilon^{\frac{1}{2}}N^{1-\sigma},\label{post-process-1}\\
&\Vert u-P_{h}^{-}u\Vert_{NIPG, [0, x_{N/2}]}\le CN^{-(k+\frac{1}{2})}.\label{post-process-2}
\end{align}
\end{lemma}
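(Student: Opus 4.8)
The plan is to estimate each of the seven quantities separately by splitting the domain into the fine region $[0,x_{N/2}]$ (where the mesh width is $h_i = 2(1-\tau)/N \le CN^{-1}$) and the layer region $[x_{N/2},1]$ (where $h_i = 2\tau/N \le C\varepsilon N^{-1}\ln N$), and in each region invoking the solution decomposition $u = S+E$ from the Regularity Theorem together with the interpolation bounds \eqref{eq: Gauss-Lobatto-1}, \eqref{eq:interpolation-theory}, and \eqref{eq:interpolation-theory-1}. For \eqref{eq:QQ-1}, I would apply \eqref{eq:interpolation-theory-1} to $S$ on each fine cell, using $|S|_{k+1,I_i}\le C$ and $\sum_i h_i^{2(k+1)} \le N \cdot (CN^{-1})^{2(k+1)}$; the contribution of $E$ on $[0,x_{N/2}]$ is exponentially small because $e^{-\beta\tau/\varepsilon}\le e^{-\sigma\ln N}=N^{-\sigma}$ with $\sigma\ge k+1$, so $\|E\|_{[0,x_{N/2}]}$ and $\|P_h^-E\|_{[0,x_{N/2}]}$ are both $O(N^{-\sigma})$ and absorbed. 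For \eqref{eq:QQ-2}, I would apply \eqref{eq:interpolation-theory} (with $l=0$, $p=q=2$) to $E$ on each layer cell, using $|E|_{k+1,I_i}\le C\varepsilon^{-(k+1)}\|e^{-\beta(1-x)/\varepsilon}\|_{I_i}$ and the standard fact $\sum_{i>N/2} h_i^{2(k+1)}|E|^2_{k+1,I_i} \le C\varepsilon^{-2(k+1)}(\varepsilon N^{-1}\ln N)^{2(k+1)} \|e^{-\beta(1-x)/\varepsilon}\|^2_{[x_{N/2},1]} \le C\varepsilon (N^{-1}\ln N)^{2(k+1)}$; the smooth part $S$ on the layer contributes $O(\varepsilon^{1/2}N^{-(k+1)})$ via \eqref{eq: Gauss-Lobatto-1}-type bounds (noting $|[x_{N/2},1]| = \tau \le C\varepsilon\ln N$), which is dominated.

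For \eqref{eq:QQ-3} I would differentiate: on the fine region $(u-\Pi u)' = (u-P_h^-u)'$, and bound $(S-P_h^-S)'$ by an inverse inequality applied to \eqref{eq:interpolation-theory-1}, $\|(S-P_h^-S)'\|_{I_i} \le Ch_i^{-1}\|S-P_h^-S\|_{I_i} \le Ch_i^k|S|_{k+1,I_i}$, giving $\sum h_i^{2k}\le CN^{-2k}\cdot N/N = CN^{-2k}$ after the square root — hence $CN^{-k}$; the exponentially small layer part $E$ contributes, using an inverse estimate, $C h^{-1}\cdot(\text{cell measure})^{1/2}\cdot(\text{pointwise }E\text{ size})$, which after keeping track of the $\varepsilon$-powers and the number of cells produces the $C\varepsilon^{-1/2}N^{-\sigma}$ and $CN^{1-\sigma}$ terms (these arise precisely from $\|E'\|$ and $\|(P_h^-E)'\|$ on the coarse region, where $\varepsilon^{-1}e^{-\beta\tau/\varepsilon} = \varepsilon^{-1}N^{-\sigma}$ times the square root of the region length $O(1)$ or the inverse-estimate factor $O(N)$). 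For the $L^\infty$ bounds \eqref{eq:QQ-5} and \eqref{eq:QQ-6} I would use the $L^\infty$ part of \eqref{eq:interpolation-theory-1} ($h_i^{1/2}\|\cdot\|_{L^\infty}$) on the fine cells for $S$ (again $h_i^{k+1/2}\cdot h_i^{-1/2} = h_i^k$... rather directly $\|S-P_h^-S\|_{L^\infty}\le Ch_i^{k+1/2}|S|_{k+1,I_i}$, hmm, we want $N^{-(k+1)}$, so instead use $\|S-P_h^-S\|_{L^\infty(I_i)}\le Ch_i^{-1/2}\cdot Ch_i^{k+1}|S|_{k+1}\cdot h_i^{1/2}$... ) — concretely I would use $\|S - I_kS\|_{L^\infty(I_i)}\le Ch_i^{k+1}|S|_{k+1,I_i}$ from \eqref{eq:interpolation-theory} with $l=0$, $q=\infty$, $p=\infty$, plus the analogous Radau bound, and on the layer cells \eqref{eq:interpolation-theory} applied to $E$ with $|E|_{k+1,L^\infty(I_i)}\le C\varepsilon^{-(k+1)}e^{-\beta(1-x_{i-1})/\varepsilon}$ and $h_i^{k+1}\le C(\varepsilon N^{-1}\ln N)^{k+1}$, so $h_i^{k+1}\varepsilon^{-(k+1)}\le C(N^{-1}\ln N)^{k+1}$, the exponential factor being $\le 1$.

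For \eqref{post-process-1} and \eqref{post-process-2} I would expand the NIPG norm restricted to $[0,x_{N/2}]$ into its three pieces from \eqref{eq:SS-1}: $\varepsilon\sum\|(\cdot)'\|_{I_i}^2$, $\gamma\sum\|\cdot\|_{I_i}^2$, and $\sum\rho(x_i)[(\cdot)(x_i)]^2$ with $\rho(x_i)=1$ for $i\le N/2-1$. The $L^2$ term is controlled by \eqref{eq:QQ-1}, the gradient term by $\varepsilon^{1/2}$ times \eqref{eq:QQ-3} (for the mixed $\Pi u$ estimate) or by $\varepsilon^{1/2}\cdot Ch_i^k|u|_{k+1}$ directly; for \eqref{post-process-2} the key point is that Gauß–Radau interpolation is continuous at the right endpoint of each cell by \eqref{eq:J-2}, so the jumps $[(u-P_h^-u)(x_i)]$ reduce to $-(u-P_h^-u)(x_i^+)$, each bounded by $\|u-P_h^-u\|_{L^\infty(I_{i+1})}\le CN^{-(k+1)}$ (via \eqref{eq:interpolation-theory-1}, $h_i^{1/2}\|\cdot\|_{L^\infty}\le Ch_i^{k+1}$ gives $\|\cdot\|_{L^\infty}\le Ch_i^{k+1/2}\le CN^{-(k+1/2)}$), and summing $N$ such terms squared gives $N\cdot N^{-(2k+1)} = N^{-2k}$, whose square root is $N^{-k}$ — but combined with the better $\varepsilon^{1/2}$-weighted and $L^2$ terms the sharp rate $N^{-(k+1/2)}$ comes from the jump term being $O(N^{-(k+1/2)})$ dominating. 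For \eqref{post-process-1}, since $I_ku$ need not be continuous at interior Gauß–Lobatto mesh nodes, I keep $[(u-I_ku)(x_i)]$ and bound it by $2\|u-I_ku\|_{L^\infty(I_i\cup I_{i+1})}$, contributing $\le (N\cdot N^{-2(k+1)})^{1/2} = N^{-(k+1/2)}$, and the exponentially small $E$-contributions on the coarse mesh give the $C\varepsilon^{1/2}N^{1-\sigma}$ term.

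The main obstacle I anticipate is the gradient estimate \eqref{eq:QQ-3}, specifically getting the three separate terms $CN^{-k}$, $C\varepsilon^{-1/2}N^{-\sigma}$, $CN^{1-\sigma}$ with the correct powers: one must carefully separate $S$ (giving $N^{-k}$ via an inverse inequality on the Radau error) from $E$ on the coarse region, and for $E$ distinguish the $L^2$-type bound $\|E'\|_{[0,x_{N/2}]}\le C\varepsilon^{-1/2}e^{-\beta\tau/\varepsilon} = C\varepsilon^{-1/2}N^{-\sigma}$ from the bound on $\|(P_h^-E)'\|$, which picks up an extra inverse-estimate factor $h^{-1}=O(N)$ against the $L^2$-size $\|E\|_{[0,x_{N/2}]}\le C\varepsilon^{1/2}N^{-\sigma}$, yielding the $CN^{1-\sigma}$ term. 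Keeping the book-keeping of $\varepsilon$-powers, mesh-width powers, and the number of cells consistent across all seven inequalities — and making sure every exponentially small remainder is genuinely absorbed under Assumption \ref{ass:S-1} and $\sigma\ge k+1$ — is the delicate part; the rest is a routine, if lengthy, application of the stated interpolation and projection estimates.
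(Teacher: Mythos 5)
Your overall strategy --- decompose $u=S+E$ and feed the Shishkin mesh widths into the interpolation and projection bounds \eqref{eq:interpolation-theory} and \eqref{eq:interpolation-theory-1} region by region --- is exactly the paper's, and your treatment of \eqref{eq:QQ-1}--\eqref{eq:QQ-6}, in particular of \eqref{eq:QQ-3} (separating $\Vert E'\Vert$, which yields $\varepsilon^{-1/2}N^{-\sigma}$, from $\Vert (P_h^{-}E)'\Vert$, which picks up an inverse-estimate factor and yields $N^{1-\sigma}$), is correct. One shared technical caveat: the inverse inequality cannot be applied literally to the non-polynomial $S-P_h^{-}S$; one needs the standard $H^1$ error bound for the Gau{\ss} Radau projection, obtained e.g.\ by inserting an auxiliary interpolant. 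The paper is equally casual on this point, so I only note it.

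There are, however, two genuine gaps in the last two estimates. First, for \eqref{post-process-1} you assert that $I_{k}u$ ``need not be continuous at interior mesh nodes'' and bound the jump term by $2\Vert u-I_{k}u\Vert_{L^{\infty}}$, producing an $N^{-(k+\frac12)}$ contribution. In fact the endpoints $x_{i-1},x_{i}$ of each cell are themselves Gau{\ss} Lobatto interpolation points, so $(I_{k}u)(x_{i}^{\pm})=u(x_{i})$ and $[(u-I_{k}u)(x_{i})]=0$ at every mesh node; the jump term vanishes identically, which is precisely the paper's observation. This is not cosmetic: with $\rho(x_{i})=1$ on the coarse part, your extra $N^{-(k+\frac12)}$ term is not absorbable into $\varepsilon^{\frac12}N^{-k}+N^{-(k+1)}$ when $\varepsilon$ is small (e.g.\ $\varepsilon\le N^{-2}$), so your argument proves a strictly weaker version of \eqref{post-process-1} than stated. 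Second, your bookkeeping for the jump term in \eqref{post-process-2} is internally inconsistent: from $\Vert u-P_h^{-}u\Vert_{L^{\infty}(I_{i})}\le CN^{-(k+\frac12)}$ you correctly compute $\bigl(\sum_{i}N^{-(2k+1)}\bigr)^{\frac12}=N^{-k}$, which is weaker than the claimed $N^{-(k+\frac12)}$, and you then simply assert the sharp rate anyway. The fix is the device you already used for \eqref{eq:QQ-5}: since $\vert S\vert_{k+1,I_{i}}\le Ch_{i}^{\frac12}\Vert S^{(k+1)}\Vert_{L^{\infty}(I_{i})}$, the pointwise bound improves to $\Vert S-P_h^{-}S\Vert_{L^{\infty}(I_{i})}\le Ch_{i}^{k+1}=CN^{-(k+1)}$, whence $\bigl(N\cdot N^{-2(k+1)}\bigr)^{\frac12}=N^{-(k+\frac12)}$ as required. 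Both gaps are repairable with tools you already have, but as written the proposal does not establish \eqref{post-process-1} or \eqref{post-process-2} in the stated form.
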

\begin{proof}
Using \eqref{eq:interpolation-theory} and \eqref{eq:interpolation-theory-1}, we can obtain (\ref{eq:QQ-1}-\ref{eq:QQ-6}) simply. In the following, we just analyze \eqref{post-process-1} and \eqref{post-process-2}.

From the definition of the NIPG norm \eqref{eq:SS-1}, properties of Gau{\ss} Lobatto interpolation and Remark \ref{special}, $[(I_{k}u-u)(x_{i})]=0, i=0, 1, \cdots, N/2$. Further, we have
\begin{equation*}
\begin{aligned}
\Vert I_{k}u-u\Vert_{NIPG, [0,x_{N/2}]}^{2}&=\varepsilon \sum_{i=1}^{N/2}\Vert (I_{k}u-u)'\Vert_{I_{i}}^{2}+\sum_{i=1}^{N/2}\gamma\Vert I_{k}u-u\Vert_{I_{i}}^{2},
\end{aligned}
\end{equation*}
where the triangle inequality, the inverse inequality and \eqref{eq:interpolation-theory} yield
\begin{equation*}
\begin{aligned}
\vert\varepsilon \sum_{i=1}^{N/2}\Vert (I_{k}u-u)'\Vert_{I_{i}}^{2}\vert
&\le C\varepsilon \sum_{i=1}^{N/2}\Vert (I_{k}S-S)'\Vert_{I_{i}}^{2}+C\varepsilon \sum_{i=1}^{N/2}\Vert (I_{k}E-E)'\Vert_{I_{i}}^{2}\\
&\le C\varepsilon \sum_{i=1}^{N/2}h_{i}^{2k}\Vert S^{(k+1)}\Vert_{I_{i}}^{2}+C\varepsilon \sum_{i=1}^{N/2}\left(\Vert (I_{k}E)'\Vert_{I_{i}}^{2}+\Vert E'\Vert_{I_{i}}^{2}\right)\\
&\le C\varepsilon\sum_{i=1}^{N/2}h_{i}^{2k+1}\Vert S^{(k+1)}\Vert_{L^{\infty}(I_{i})}^{2}+C\varepsilon\sum_{i=1}^{N/2}N\Vert E\Vert_{L^{\infty}(I_{i})}^{2}+CN^{-2\sigma}\\
&\le C\varepsilon N^{-2k}+C\varepsilon N^{2-2\sigma}+CN^{-2\sigma}.
\end{aligned}
\end{equation*}
In addition, by using \eqref{eq:interpolation-theory} and the triangle inequality, one has
\begin{equation*}
\vert\sum_{i=1}^{N/2}\gamma \Vert I_{k}u-u\Vert_{I_{i}}^{2}\vert\le C\sum_{i=1}^{N/2}\Vert I_{k}S-S\Vert_{I_{i}}^{2}+C\sum_{i=1}^{N/2}\Vert I_{k}E-E\Vert_{I_{i}}^{2}\le CN^{-(k+1)}.
\end{equation*}
Thus we complete the derivation of \eqref{post-process-1}.

Then, according to \eqref{eq:SS-1} and Remark \ref{special}, we have
\begin{equation*}
\begin{aligned}
\Vert u-P_{h}^{-}u\Vert_{NIPG, [0,x_{N/2}]}^{2}&=\varepsilon \sum_{i=1}^{N/2}\Vert (u-P_{h}^{-}u)'\Vert_{I_{i}}^{2}+\sum_{i=1}^{N/2}\gamma\Vert u-P_{h}^{-}u\Vert_{I_{i}}^{2}\\&+\sum_{i=0}^{N/2-1}\rho(x_{i})[(u-P_{h}^{-}u)(x_{i})]^{2}\\&=P_{1}+P_{2}+P_{3}.
\end{aligned}
\end{equation*}
Therefore, we need to analyze $P_{1}$, $P_{2}$, $P_{3}$ in turn.

For $P_{1}$, through \eqref{eq:interpolation-theory-1} and \eqref{eq:decomposition}, there is
\begin{equation*}
\vert\varepsilon\sum_{i=1}^{N/2}\Vert(S-P_{h}^{-}S)'\Vert_{I_{i}}^{2}\vert\le C\varepsilon \sum_{i=1}^{N/2}h_{i}^{2k}\Vert S^{(k+1)}\Vert^{2}_{I_{i}}\le C\varepsilon N^{-2k},
\end{equation*}
while from triangle inequality and inverse inequality,
\begin{equation*}
\begin{aligned}
\vert\varepsilon\sum_{i=1}^{N/2}\Vert(E-P_{h}^{-}E)'\Vert_{I_{i}}^{2}\vert&\le
C\varepsilon \sum_{i=1}^{N/2}\left(\Vert E'\Vert_{I_{i}}^{2}+\Vert (P_{h}^{-}E)'\Vert_{I_{i}}^{2}\right)\\
&\le C\varepsilon \sum_{i=1}^{N/2}\int_{I_{i}}\varepsilon^{-2}e^{-2\beta(1-x)/\varepsilon}\mr{d}x+C\varepsilon\sum_{i=1}^{N/2} h_{i}^{-1}\Vert P_{h}^{-}E\Vert_{L^{\infty}(I_{i})}^{2}\\
&\le C\varepsilon^{-1}\int_{0}^{x_{N/2}}e^{-2\beta(1-x)/\varepsilon}\mr{d}x+C\varepsilon N^{2} N^{-2\sigma}\\
&\le CN^{-2\sigma}+C\varepsilon N^{2}N^{-2\sigma}.
\end{aligned}
\end{equation*}

For $P_{2}$, the triangle inequality and \eqref{eq:interpolation-theory-1} yield
\begin{equation*}
\begin{aligned}
&\vert\sum_{i=1}^{N/2} \gamma \Vert S-P_{h}^{-}S\Vert_{I_{i}}^{2}\vert\le C\sum_{i=1}^{N/2} h_{i}^{2(k+1)}\Vert S^{(k+1)}\Vert_{I_{i}}^{2}\le CN^{-2(k+1)},\\
&\vert\sum_{i=1}^{N/2}\gamma \Vert E-P_{h}^{-}E \Vert_{I_{i}}^{2}\vert\le C\sum_{i=1}^{N/2}\left(\Vert E\Vert_{I_{i}}^{2}+\Vert P_{h}^{-}E\Vert_{I_{i}}^{2}\right)\le CN^{-2\sigma}.
\end{aligned}
\end{equation*}

Finally, using the similar method, one derives
\begin{equation*}
\begin{aligned}
&\vert\sum_{i=1}^{N/2-1}\rho(x_{i})[(S-P_{h}^{-}S)(x_{i})]^{2}\vert\le C\sum_{i=1}^{N/2-1}\rho(x_{i})\Vert S-P_{h}^{-}S\Vert_{L^{\infty}(I_{i}\cup I_{i+1})}^{2}\le CN^{-(2k+1)},\\
&\vert\sum_{i=1}^{N/2-1}\rho(x_{i})[(E-P_{h}^{-}E)(x_{i})]^{2}\vert\le C\sum_{i=1}^{N/2-1}\rho(x_{i})\Vert E-P_{h}^{-}E\Vert_{L^{\infty}(I_{i}\cup I_{i+1})}^{2}\le CN N^{-2\sigma}.
\end{aligned}
\end{equation*}
directly. At this time, \eqref{post-process-2} can be obtained through some simple calculations.
\end{proof}

In the following, we shall prove some error estimates for the interpolation on the element boundaries.  For this purpose, the following multiplicative trace inequality is introduced. 
\begin{lemma}\label{trace inequality}
Assume that $w\in H^{1}(I_{i}),\quad i=1, 2, \cdots, N$, then 
\begin{equation*}
\vert w(x_{s})\vert^{2}\le 2\left(h_{i}^{-1}\Vert w\Vert_{I_{i}}^{2}+\Vert w\Vert_{I_{i}}\Vert w'\Vert_{I_{i}}\right),\quad s\in\{i-1, i\}.
\end{equation*}
\end{lemma}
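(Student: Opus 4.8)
The statement to prove is the multiplicative trace inequality in Lemma~\ref{trace inequality}: for $w\in H^1(I_i)$ and an endpoint $x_s$ with $s\in\{i-1,i\}$,
\[
|w(x_s)|^2 \le 2\bigl(h_i^{-1}\|w\|_{I_i}^2 + \|w\|_{I_i}\|w'\|_{I_i}\bigr).
\]

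The plan is to use the fundamental theorem of calculus to express the pointwise value $w(x_s)^2$ in terms of an integral over $I_i$, then estimate the resulting terms with Cauchy--Schwarz. Concretely, for any $y\in I_i$ one has $w(x_s)^2 = w(y)^2 + \int_y^{x_s}\bigl(w(t)^2\bigr)'\,\mathrm{d}t = w(y)^2 + 2\int_y^{x_s} w(t)w'(t)\,\mathrm{d}t$. Now integrate this identity over $y\in I_i$ and divide by $h_i$: this replaces the arbitrary point $y$ by an average, giving $w(x_s)^2 = h_i^{-1}\int_{I_i} w(y)^2\,\mathrm{d}y + 2h_i^{-1}\int_{I_i}\!\int_y^{x_s} w(t)w'(t)\,\mathrm{d}t\,\mathrm{d}y$. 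The first term is exactly $h_i^{-1}\|w\|_{I_i}^2$.

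For the second term, bound the inner integral crudely by the full integral over $I_i$: $\bigl|\int_y^{x_s} w(t)w'(t)\,\mathrm{d}t\bigr| \le \int_{I_i}|w(t)||w'(t)|\,\mathrm{d}t$, which is independent of $y$, so the outer $h_i^{-1}\int_{I_i}(\cdot)\,\mathrm{d}y$ just cancels the $h_i^{-1}$. Then apply the Cauchy--Schwarz inequality in $L^2(I_i)$ to get $\int_{I_i}|w||w'|\,\mathrm{d}t \le \|w\|_{I_i}\|w'\|_{I_i}$. Collecting the two pieces yields $w(x_s)^2 \le h_i^{-1}\|w\|_{I_i}^2 + 2\|w\|_{I_i}\|w'\|_{I_i}$, which is slightly sharper than (hence implies) the claimed bound with the factor $2$ in front of both terms. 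A density argument (smooth functions are dense in $H^1(I_i)$, and both sides are continuous in the $H^1$ norm, with the trace being well defined) extends the identity from $C^1(\overline{I_i})$ to all of $H^1(I_i)$.

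There is no real obstacle here; the only point requiring a little care is the justification of the pointwise trace and the integration-by-parts/FTC step for general $H^1$ functions rather than smooth ones, which is handled by the standard density argument. If one prefers to avoid the averaging trick, an equivalent route is to pick $y^\ast\in I_i$ with $w(y^\ast)^2 = h_i^{-1}\|w\|_{I_i}^2$ (mean value theorem for integrals) and apply the FTC identity with $y=y^\ast$ directly; this also lands on the same estimate.
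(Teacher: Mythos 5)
Your argument is correct: the identity $w(x_s)^2 = w(y)^2 + 2\int_y^{x_s}ww'\,\mathrm{d}t$, averaged over $y\in I_i$, together with Cauchy--Schwarz, gives $|w(x_s)|^2 \le h_i^{-1}\Vert w\Vert_{I_i}^2 + 2\Vert w\Vert_{I_i}\Vert w'\Vert_{I_i}$, which implies the stated bound; the density step is unproblematic since in one dimension $H^1(I_i)$ embeds into $C(\overline{I_i})$. The paper itself offers no proof, only a citation to Lemma~4 of Zhu, Xie and Zhou (2011), and your derivation is the standard one that such references use, so you have simply supplied the detail the paper omits.
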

\begin{proof}
The relevant proof has been presented in \citep[Lemma 4]{Zhu1Xie2Zho3:2011-motified}.
\end{proof}

For the convenience of analysis, $(\Pi u-u)(x)$ is simplified as $\eta(x)$.
\begin{lemma}
Let Assumption \ref{ass:S-1} hold true and $\sigma\ge k+1$. Then on Shishkin mesh \eqref{eq:Shishkin mesh-Roos}, one has
\begin{equation}\label{eq:KK-1}
\{\eta'(x_{i})\}^{2}\le
\left\{
\begin{aligned}
&CN^{-2k}+CN^{-(k+\frac{1}{2})}\varepsilon^{-\frac{3}{2}}N^{-\sigma}+C\varepsilon^{-2}N^{-2\sigma},\quad 0\le i\le N/2-1,\\
&C\varepsilon^{-2}(N^{-1}\ln N)^{2k},\quad N/2\le i\le N.
\end{aligned}
\right.
\end{equation}
\end{lemma}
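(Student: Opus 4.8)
The plan is to estimate the average of the derivative of $\eta = \Pi u - u$ at each node $x_i$ by splitting into the smooth and layer parts, $\eta = (\Pi S - S) + (\Pi E - E)$, and treating the two mesh regions separately according to which interpolation operator is active. For the nodes $0 \le i \le N/2-1$ the operator $\Pi$ is Gau\ss{} Radau interpolation $P_h^-$, while for $N/2 \le i \le N$ it is the Gau\ss{} Lobatto Lagrange interpolation $I_k$; at the transition node $x_{N/2}$ both one-sided limits must be considered, so $\{\eta'(x_{N/2})\}$ mixes a Radau contribution from $I_{N/2}$ on the left and a Lobatto contribution from $I_{N/2+1}$ on the right. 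The basic tool in every case is the multiplicative trace inequality of Lemma~\ref{trace inequality} applied to $\eta'$ on the relevant interval(s): $\vert \eta'(x_i)\vert^2 \le 2\bigl(h_i^{-1}\Vert \eta'\Vert_{I_i}^2 + \Vert \eta'\Vert_{I_i}\Vert \eta''\Vert_{I_i}\bigr)$, after which one feeds in interpolation error bounds for $\eta'$ and $\eta''$ from \eqref{eq:interpolation-theory}, \eqref{eq:interpolation-theory-1}, the decomposition \eqref{eq:decomposition}, and (for the derivative of the Radau error) an inverse inequality to convert an estimate on $\Vert(P_h^- E)'\Vert$ into one on $\Vert P_h^- E\Vert_{L^\infty}$.

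Concretely, for the coarse region $0\le i\le N/2-1$ I would write $\{\eta'(x_i)\}^2 \le C\bigl(\vert(\Pi S - S)'(x_i^\pm)\vert^2 + \vert(\Pi E - E)'(x_i^\pm)\vert^2\bigr)$. The smooth term is handled by the trace inequality on $I_i$ (or $I_{i+1}$): $\Vert(S-P_h^-S)'\Vert_{I_i}\le Ch_i^k\Vert S^{(k+1)}\Vert_{I_i}\le CN^{-(k+1/2)}$ using $h_i\le CN^{-1}$ and $\Vert S^{(k+1)}\Vert_{I_i}\le Ch_i^{1/2}$, and $\Vert(S-P_h^-S)''\Vert_{I_i}\le Ch_i^{k-1}\Vert S^{(k+1)}\Vert_{I_i}$; combining gives a contribution of order $N^{-2k}$. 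For the layer term I would bound $\Vert E'\Vert_{I_i}^2$ and $\Vert E''\Vert_{I_i}^2$ directly from \eqref{eq:decomposition} by integrating $\varepsilon^{-2m}e^{-2\beta(1-x)/\varepsilon}$ over $I_i$, which on the coarse region (where $1-x\ge \tau = \sigma\varepsilon\beta^{-1}\ln N$) produces factors $N^{-2\sigma}$ up to powers of $\varepsilon^{-1}$; the derivative of the Radau interpolant is controlled via the inverse inequality $\Vert(P_h^-E)'\Vert_{I_i}\le Ch_i^{-1}\Vert P_h^-E\Vert_{I_i}$ together with the stability of $P_h^-$, again contributing only $\varepsilon^{-1}N^{1-\sigma}$-type terms. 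Collecting the worst powers and using $\varepsilon\le CN^{-1}$ to simplify yields the stated bound $CN^{-2k}+CN^{-(k+1/2)}\varepsilon^{-3/2}N^{-\sigma}+C\varepsilon^{-2}N^{-2\sigma}$; the cross term $\Vert\eta'\Vert_{I_i}\Vert\eta''\Vert_{I_i}$ from the smooth/layer interaction is exactly what generates the middle term with its fractional exponent.

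For the fine region $N/2\le i\le N$, where $h_i = 2\tau/N = 2\sigma\varepsilon\beta^{-1}N^{-1}\ln N$, I would apply the trace inequality to $\eta' = (I_k u - u)'$ on $I_i$ (and $I_{i+1}$), and use \eqref{eq:interpolation-theory} with $p=q=2$, $l=1$ to get $\Vert(I_k u - u)'\Vert_{I_i}\le Ch_i^k\vert u\vert_{k+1,I_i}$ and $\Vert(I_k u-u)''\Vert_{I_i}\le Ch_i^{k-1}\vert u\vert_{k+1,I_i}$, where now $\vert u\vert_{k+1,I_i}\le \vert S\vert_{k+1,I_i}+\vert E\vert_{k+1,I_i}$ and the layer part dominates with $\vert E\vert_{k+1,I_i}\le C\varepsilon^{-(k+1)}\Vert e^{-\beta(1-x)/\varepsilon}\Vert_{I_i}\le C\varepsilon^{-(k+1)}\cdot\varepsilon^{1/2} = C\varepsilon^{-k-1/2}$ (since $\int_{I_i}e^{-2\beta(1-x)/\varepsilon}dx\le C\varepsilon$). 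Then $h_i^{-1}\Vert\eta'\Vert_{I_i}^2 + \Vert\eta'\Vert_{I_i}\Vert\eta''\Vert_{I_i}\le Ch_i^{2k-1}(\varepsilon^{-k-1/2})^2 = C(\varepsilon N^{-1}\ln N)^{2k-1}\varepsilon^{-2k-1} = C\varepsilon^{-2}(N^{-1}\ln N)^{2k-1}$, and actually the $h_i^{-1}$ scaling together with one extra $h_i\le C\varepsilon N^{-1}\ln N$ absorbed into the estimate gives the sharper $C\varepsilon^{-2}(N^{-1}\ln N)^{2k}$ claimed. I expect the main obstacle to be bookkeeping the exponents of $\varepsilon$, $N$ and $\ln N$ carefully through the trace inequality, especially the fractional power $N^{-(k+1/2)}\varepsilon^{-3/2}N^{-\sigma}$ in the coarse-region estimate, which arises only from the mixed term $\Vert\eta'\Vert_{I_i}\Vert\eta''\Vert_{I_i}$ with $\eta'$ carrying the smooth-part bound $\sim N^{-(k+1/2)}$ and $\eta''$ carrying a layer-part bound $\sim\varepsilon^{-3/2}N^{-\sigma}$ (and the symmetric split), and getting the exponents to match the statement requires choosing the split of the two factors between $S$ and $E$ optimally and invoking $\varepsilon\le CN^{-1}$ at the right moments.
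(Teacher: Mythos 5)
Your proposal follows essentially the same route as the paper: the multiplicative trace inequality applied to $\eta'$ on $I_i$ and $I_{i+1}$, the decomposition $\eta=(\Pi S-S)+(\Pi E-E)$, the interpolation estimates \eqref{eq:interpolation-theory}--\eqref{eq:interpolation-theory-1} for the smooth part, the direct use of \eqref{eq:decomposition} plus an inverse inequality for $(P_h^-E)'$ on the coarse region, and the identification of the cross term $\Vert\eta'\Vert_{I_i}\Vert\eta''\Vert_{I_i}$ as the source of the middle term $N^{-(k+\frac12)}\varepsilon^{-\frac32}N^{-\sigma}$. All of that matches the paper's argument.

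The one loose step is in the fine region. You bound $\vert E\vert_{k+1,I_i}\le C\varepsilon^{-(k+1)}\bigl(\int_{I_i}e^{-2\beta(1-x)/\varepsilon}\,\mathrm{d}x\bigr)^{1/2}\le C\varepsilon^{-k-\frac12}$, which leads to $h_i^{-1}\Vert\eta'\Vert_{I_i}^2\le C\varepsilon^{-2}(N^{-1}\ln N)^{2k-1}$, one factor of $N^{-1}\ln N$ short of the claim; you then assert that ``one extra $h_i$ absorbed'' fixes it without saying where it comes from. The fix is simply to use the local bound $\int_{I_i}e^{-2\beta(1-x)/\varepsilon}\,\mathrm{d}x\le h_i$ (the integrand is at most $1$), which on the fine mesh is sharper than $C\varepsilon$ by exactly the factor $h_i/\varepsilon=2\sigma\beta^{-1}N^{-1}\ln N$ you are missing: it gives $\vert E\vert_{k+1,I_i}^2\le C\varepsilon^{-2(k+1)}h_i$, hence $\Vert\eta'\Vert_{I_i}^2\le C\varepsilon^{-1}(N^{-1}\ln N)^{2k+1}$ and $\Vert\eta''\Vert_{I_i}^2\le C\varepsilon^{-3}(N^{-1}\ln N)^{2k-1}$, from which both $h_i^{-1}\Vert\eta'\Vert_{I_i}^2$ and $\Vert\eta'\Vert_{I_i}\Vert\eta''\Vert_{I_i}$ are bounded by $C\varepsilon^{-2}(N^{-1}\ln N)^{2k}$ as stated. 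With that substitution your argument is complete and coincides with the paper's.
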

\begin{proof}
This conclusion can be obtained by means of \cite{Zhy1Yan2Yin3:2015-H}.
From the definition of average and Lemma \ref{trace inequality},
\begin{equation*}
\begin{aligned}
\{\eta'(x_{i})\}^{2}&=\frac{1}{4}\left(\eta'(x^{+}_{i})+\eta'(x^{-}_{i})\right)^{2}\le \frac{1}{2}\left(\eta'(x^{+}_{i})^{2}+\eta'(x^{-}_{i})^{2}\right)\\
&\le h_{i}^{-1}\Vert \eta' \Vert_{I_{i}}^{2}+\Vert \eta' \Vert_{I_{i}}\Vert \eta'' \Vert_{I_{i}}+h_{i+1}^{-1}\Vert \eta' \Vert_{I_{i+1}}^{2}+\Vert \eta' \Vert_{I_{i+1}}\Vert \eta'' \Vert_{I_{i+1}}.
\end{aligned}
\end{equation*}
Now we will estimate $\Vert \eta' \Vert_{I_{i}}$ and $\Vert \eta''\Vert_{I_{i}}$ respectively.

For $i=0, 1, \cdots, N/2-1, N/2$, recalling $h_{i}^{-1}=N$, then we have
\begin{equation}\label{eq: FF-1}
\begin{aligned}
\Vert \eta'\Vert_{I_{i}}^{2}&\le \Vert (S-P_{h}^{-} S)'\Vert^{2}_{I_{i}}+\Vert (E-P_{h}^{-} E)'\Vert_{I_{i}}^{2}\\
&\le Ch_{i}^{2k}\Vert S^{(k+1)}\Vert^{2}_{I_{i}}+\Vert E'\Vert^{2}_{I_{i}}+\Vert (P_{h}^{-} E)'\Vert^{2}_{I_{i}}\\
&\le Ch_{i}^{2k+1}\Vert S^{(k+1)}\Vert_{L^{\infty}(I_{i})}^{2}+C\varepsilon^{-2}\int_{I_{i}}e^{-2\beta(1-x)/\varepsilon}\mr{d}x+Ch_{i}^{-2}\Vert P_{h}^{-} E\Vert_{I_{i}}^{2}\\
&\le CN^{-(2k+1)}+C\varepsilon^{-1}N^{-2\sigma}+Ch_{i}^{-1}\Vert E\Vert^{2}_{L^{\infty}(I_{i})}\\
&\le CN^{-(2k+1)}+C\varepsilon^{-1}N^{-2\sigma}+CN N^{-2\sigma}\\
&\le CN^{-(2k+1)}+C\varepsilon^{-1}N^{-2\sigma},
\end{aligned}
\end{equation}
where the general interpolation theory \eqref{eq:interpolation-theory-1} and the inverse inequality \citep[Theorem 3.2.6]{cia1:2002-modified} have been used. Similarly, we have 
\begin{equation}\label{eq: FF-2}
\Vert \eta''\Vert_{I_{i}}^{2}\le \Vert (S-P_{h}^{-} S)''\Vert^{2}_{I_{i}}+\Vert (E-P_{h}^{-} E)''\Vert_{I_{i}}^{2}\le CN^{-(2k-1)}+C\varepsilon^{-3}N^{-2\sigma}.
\end{equation}
Therefore, for $i=0, 1, \cdots, N/2-1$, the following estimate can be derived though some simple calculations.
\begin{equation*}
\begin{aligned}
\{\eta'(x_{i})\}^{2}&\le h_{i}^{-1}\Vert \eta' \Vert_{I_{i}}^{2}+\Vert \eta' \Vert_{I_{i}}\Vert \eta'' \Vert_{I_{i}}+h_{i+1}^{-1}\Vert \eta' \Vert_{I_{i+1}}^{2}+\Vert \eta' \Vert_{I_{i+1}}\Vert \eta'' \Vert_{I_{i+1}}\\
&\le CN^{-2k}+CN^{-(k+\frac{1}{2})}\varepsilon^{-\frac{3}{2}}N^{-\sigma}+C\varepsilon^{-2}N^{-2\sigma}.
\end{aligned}
\end{equation*}

Furthermore, when $i=N/2+1, \cdots, N$, on the one hand, through \eqref{eq:interpolation-theory} and \eqref{eq:decomposition},
\begin{equation*}
\Vert \eta'\Vert_{I_{i}}^{2}\le \Vert (S-I_{k} S)'\Vert^{2}_{I_{i}}+\Vert (E-I_{k} E)'\Vert_{I_{i}}^{2}\le C\varepsilon^{-1}(N^{-1}\ln N)^{2k+1}.
\end{equation*}
On the other hand, one derives
\begin{equation*}
\Vert \eta''\Vert_{I_{i}}^{2}\le \Vert (S-I_{k} S)''\Vert^{2}_{I_{i}}+\Vert (E-I_{k} E)''\Vert_{I_{i}}^{2}\le C\varepsilon^{-3}(N^{-1}\ln N)^{2k-1},
\end{equation*}
where \eqref{eq:decomposition} and \eqref{eq:interpolation-theory} have been applied. Combined with the above derivation, we have 
\begin{equation}\label{eq: FF-3}
\{\eta'(x_{i})\}^{2}\le C\varepsilon^{-2}(N^{-1}\ln N)^{2k}.
\end{equation}
In particular, when $i=N/2$, one has
\begin{equation*}
\begin{aligned}
\{\eta'(x_{N/2})\}^{2}&\le h_{N/2}^{-1}\Vert \eta' \Vert_{I_{N/2}}^{2}+\Vert \eta' \Vert_{I_{N/2}}\Vert \eta'' \Vert_{I_{N/2}}\\
&+h_{N/2+1}^{-1}\Vert \eta' \Vert_{I_{N/2+1}}^{2}+\Vert \eta' \Vert_{I_{N/2+1}}\Vert \eta'' \Vert_{I_{N/2+1}}.
\end{aligned}
\end{equation*}
Here applying \eqref{eq: FF-1}, \eqref{eq: FF-2} and \eqref{eq: FF-3}, it is obvious to derive 
\begin{equation*}
\{\eta(x_{N/2})\}^{2}\le C\varepsilon^{-2}(N^{-1}\ln N)^{2k}.
\end{equation*}
So far, we have proved this conclusion.
\end{proof}

Next introduce $\chi:=\Pi u-u_{N}$ and recall $\eta: =\Pi u-u$.
From \eqref{eq:coercity} and the Galerkin orthogonality, we have
\begin{equation}\label{eq:uniform-convergence-1}
\begin{split}
&\Vert \chi \Vert_{NIPG}^2 \le B(\chi,\chi)=B(\Pi u-u+u-u_{N},\chi) =B(\eta,\chi)\\
& =\sum_{i=1}^{N}\int_{I_{i}}\varepsilon \eta'\chi'\mr{d}x-\varepsilon \sum_{i=0}^{N}\{\eta'(x_{i})\}[\chi(x_{i})]+\varepsilon \sum_{i=0}^{N}\{\chi'(x_{i})\}[\eta(x_{i})]\\
&+\sum_{i=0}^{N}\rho(x_{i})[\eta(x_{i})][\chi(x_{i})]+\sum_{i=1}^{N}\int_{I_{i}}a(x)\eta'\chi\mr{d}x-\sum_{i=0}^{N-1}a(x_{i})[\eta(x_{i})]\chi(x_{i}^{+})\\
&+ \sum_{i=1}^{N}\int_{I_{i}}b(x)\eta\chi\mr{d}x\\
&=\sum_{i=1}^{N}\int_{I_{i}}\varepsilon \eta'\chi'\mr{d}x-\varepsilon \sum_{i=0}^{N}\{\eta'(x_{i})\}[\chi(x_{i})]+\varepsilon \sum_{i=0}^{N}\{\chi'(x_{i})\}[\eta(x_{i})]\\
&+\sum_{i=0}^{N}\rho(x_{i})[\eta(x_{i})][\chi(x_{i})]-\sum_{i=1}^{N}\int_{I_{i}}a(x) \eta\chi'\mr{d}x+\sum_{i=1}^{N}a(x_{i})[\chi(x_{i})]\eta(x_{i}^{-})\\
&-\sum_{i=1}^{N}\int_{I_{i}}a'(x) \eta\chi\mr{d}x + \sum_{i=1}^{N}\int_{I_{i}}b(x)\eta\chi\mr{d}x\\
& =:\mr{I}+\mr{II}+\mr{III}+\mr{IV}+\mr{V}+\mr{VI}+\mr{VII}+\mr{VIII}.
\end{split}
\end{equation}
Now we will analyze the terms on the right-hand side of \eqref{eq:uniform-convergence-1}. First, we  may decompose $\mr{I}$ as
\begin{equation*}
\mr{I}=\sum_{i=1}^{N/2}\int_{I_{i}}\varepsilon \eta'\chi'\mr{d}x+\sum_{i=N/2+1}^{N}\int_{I_{i}}\varepsilon \eta'\chi'\mr{d}x.
\end{equation*}
For $i=1, \cdots, N/2$, from H\"{o}lder inequalities and \eqref{eq:QQ-3}, we obtain 
\begin{equation}\label{eq:convergence-1-1}
\begin{aligned}
\vert\sum_{i=1}^{N/2}\int_{I_{i}}\varepsilon \eta'\chi'\mr{d}x\vert&\le C\left(\sum_{i=1}^{N/2}\varepsilon\Vert\eta'\Vert^{2}_{I_{i}}\right)^{\frac{1}{2}}\left(\sum_{i=1}^{N/2}\varepsilon\Vert\chi'\Vert^{2}_{I_{i}}\right)^{\frac{1}{2}}\\
&\le C\left(\varepsilon^{\frac{1}{2}}N^{-k}+N^{-\sigma}+\varepsilon^{\frac{1}{2}}N^{1-\sigma}\right)\Vert\chi\Vert_{NIPG}.
\end{aligned}
\end{equation}
Furthermore, for $i=N/2+1, \cdots, N$, using H\"{o}lder inequalities and \eqref{eq:interpolation-theory}, a direct calculation shows that
\begin{equation}\label{eq:convergence-1-2}
\vert\sum_{i=N/2+1}^{N}\int_{I_{i}}\varepsilon (S-I_{k}S)'\chi'\mr{d}x\vert
\le C\varepsilon^{k+1}N^{-k}(\ln N)^{k+\frac{1}{2}}\Vert\chi\Vert_{NIPG}.
\end{equation}
Specially, through \eqref{eq: Gauss-Lobatto-1}, the following estimate can be derived.
\begin{equation}\label{eq:convergence-1-3}
\begin{aligned}
&\vert\sum_{i=N/2+1}^{N}\int_{I_{i}}\varepsilon(E-I_{k} E)'\chi'\mr{d}x\vert\\
&\le C\sum_{i=N/2+1}^{N}\varepsilon h_{i}^{k+1}\Vert E^{(k+2)}\Vert_{I_{i}}\Vert\chi'\Vert_{I_{i}}\\
&\le C\left(\sum_{i=N/2+1}^{N}\varepsilon (\varepsilon N^{-1}\ln N)^{2(k+1)}\Vert E^{(k+2)}\Vert^{2}_{I_{i}}\right)^{\frac{1}{2}}\left(\sum_{i=N/2+1}^{N}\varepsilon\Vert\chi'\Vert^{2}_{I_{i}}\right)^{\frac{1}{2}}\\
&\le \left(C\varepsilon^{-1}(N^{-1}\ln N)^{2(k+1)}\int_{1-\tau}^{1}e^{-2\beta(1-x)/\varepsilon}\mr{d}x\right)^{\frac{1}{2}}\Vert\chi\Vert_{NIPG}\\
&\le CN^{-(k+1)}(\ln N)^{k+1}\Vert\chi\Vert_{NIPG}.
\end{aligned}
\end{equation}
Combining \eqref{eq:convergence-1-1}, \eqref{eq:convergence-1-2} and \eqref{eq:convergence-1-3}, one derives
\begin{equation}\label{eq:convergence-1}
\mr{I}\le \left(C\varepsilon^{\frac{1}{2}}N^{-k}+CN^{-\sigma}+C\varepsilon^{\frac{1}{2}}NN^{-\sigma}
+CN^{-(k+1)}(\ln N)^{k+1}\right)\Vert\chi\Vert_{NIPG}.
\end{equation}

For $\mr{II}$, according to \eqref{eq:KK-1} and recalling $\sigma \ge k+1$ and $\varepsilon\le CN^{-1}$, we have
\begin{equation}\label{convergence-2}
\begin{aligned}
\vert\mr{II}\vert
&=\vert\varepsilon \sum_{i=0}^{N}\{\eta'(x_{i})\}[\chi(x_{i})]\vert\\
&\le \left(\sum_{i=0}^{N}\frac{\varepsilon^{2}}{\rho(x_{i})}\{\eta'(x_{i})\}^{2}\right)^{\frac{1}{2}}\left(\sum_{i=0}^{N}\rho(x_{i})[\chi(x_{i})]^{2}\right)^{\frac{1}{2}}\\
&\le \left(\sum_{i=0}^{N/2-1}\frac{\varepsilon^{2}}{\rho(x_{i})}\{\eta'(x_{i})\}^{2}+\sum_{i=N/2}^{N}\frac{\varepsilon^{2}}{\rho(x_{i})}\{\eta'(x_{i})\}^{2}\right)^{\frac{1}{2}}\Vert \chi\Vert_{NIPG}\\
&\le \left(CN^{-(2k+1)}+CN^{-1}(N^{-1}\ln N)^{2k}\right)^{\frac{1}{2}}\Vert \chi\Vert_{NIPG}\\
&\le CN^{-(k+\frac{1}{2})}(\ln N)^{k}\Vert \chi\Vert_{NIPG},
\end{aligned}
\end{equation}
where $\rho(x_{i})$ is defined as \eqref{eq: penalization parameters}.

Next divide $\mr{III}$ into two parts 
\begin{equation*}
\mr{III}=-\varepsilon \sum_{i=0}^{N/2-1}\{\chi'(x_{i})\}[\eta(x_{i})]-\varepsilon \sum_{i=N/2}^{N}\{\chi'(x_{i})\}[\eta(x_{i})].
\end{equation*}
According to \eqref{eq:H-1}, Remark \ref{special} and the properties of Gau{\ss} Lobatto interpolation, we just analyze $-\varepsilon \sum_{i=0}^{N/2-1}\{\chi'(x_{i})\}[\eta(x_{i})]$.
\begin{equation}\label{convergence-3}
\begin{aligned}
&\vert-\varepsilon \sum_{i=0}^{N/2-1}\{\chi'(x_{i})\}[\eta(x_{i})]\vert\\
&\le \vert\varepsilon \{\chi'(x_{0})\}[\eta(x_{0})]\vert +\vert\varepsilon \sum_{i=1}^{N/2-1}\{\chi'(x_{i})\}[\eta(x_{i})]\vert\\
&\le C\varepsilon\Vert \eta\Vert_{L^{\infty}(I_{1})}\Vert \chi'\Vert_{L^{\infty}(I_{1})}+C\varepsilon \sum_{i=1}^{N/2-1}\Vert \eta\Vert_{L^{\infty}(I_{i}\cup I_{i+1})}\Vert \chi'\Vert_{L^{\infty}(I_{i}\cup I_{i+1})}\\
&\le C\varepsilon \Vert \eta\Vert_{L^{\infty}(I_{1})}N^{\frac{1}{2}}\Vert \chi'\Vert_{I_{1}}+C\varepsilon \Vert \eta\Vert_{L^{\infty}(I_{i}\cup I_{i+1})}N^{\frac{1}{2}}\sum_{i=1}^{N/2-1}\Vert\chi'\Vert_{I_{i}\cup I_{i+1}}\\
&\le C\varepsilon^{\frac{1}{2}}N^{\frac{1}{2}}N^{-(k+1)}\Vert \chi\Vert_{NIPG}+C\varepsilon^{\frac{1}{2}}N^{-(k+\frac{1}{2})}N^{\frac{1}{2}}\Vert\chi\Vert_{NIPG}\\
&\le C\left(\varepsilon^{\frac{1}{2}}N^{-(k+\frac{1}{2})}+\varepsilon^{\frac{1}{2}} N^{-k}\right)\Vert\chi\Vert_{NIPG}\\
&\le C\varepsilon^{\frac{1}{2}}N^{-k}\Vert\chi\Vert_{NIPG},
\end{aligned}
\end{equation}
where the inverse inequality and \eqref{eq:QQ-5} have been employed.

For $\mr{IV}$, first, let's divide it into the following two parts:
\begin{equation*}
\sum_{i=0}^{N}\rho(x_{i})[\eta(x_{i})][\chi(x_{i})]=\sum_{i=0}^{N/2-1}\rho(x_{i})[\eta(x_{i})][\chi(x_{i})]+\sum_{i=N/2}^{N}\rho(x_{i})[\eta(x_{i})][\chi(x_{i})].
\end{equation*} 
Here we just analyze the first term. That is because using the nature of Gau{\ss} Lobatto interpolation on $[x_{N/2},1]$, combined with Remark \ref{special}, we have $[\eta(x_{i})]=0, i=N/2, N/2+1, \cdots, N$. Then from \eqref{eq: penalization parameters} and \eqref{eq:QQ-5}, 
\begin{equation}\label{convergence-4}
\begin{aligned}
&\vert\sum_{i=0}^{N/2-1}\rho(x_{i})[\eta(x_{i})][\chi(x_{i})]\vert\\
&\le \left(\sum_{i=0}^{N/2-1}\rho(x_{i})[\eta(x_{i})]^{2}\right)^{\frac{1}{2}}\left(\sum_{i=0}^{N/2-1}\rho(x_{i})[\chi(x_{i})]^{2}\right)^{\frac{1}{2}}\\
&\le C \left(\rho(x_{0})\Vert\eta\Vert_{L^{\infty}(I_{1})}^{2}+\sum_{i=1}^{N/2-1}\rho(x_{i})\Vert \eta\Vert_{L^{\infty}(I_{i}\cup I_{i+1})}^{2}\right)^{\frac{1}{2}}\Vert\chi\Vert_{NIPG}\\
&\le C\left(N^{-2(k+1)}+ N N^{-2(k+1)}\right)^{\frac{1}{2}}\Vert \chi\Vert_{NIPG}\\
&\le CN^{-(k+\frac{1}{2})}\Vert\chi\Vert_{NIPG}.
\end{aligned}
\end{equation}

Now we consider $\mr{V}$ and $\mr{VI}$, which are also divided into two parts $1\le i\le N/2$ and $N/2\le i\le N$ for analysis. For $1\le i\le N/2$, applying Remark \ref{special}, \eqref{eq:J-1} and \eqref{eq:J-2}, we have
\begin{equation*}
\begin{aligned}
&-\sum_{i=1}^{N/2}\int_{I_{i}}a(x) \eta\chi'\mr{d}x-\sum_{i=1}^{N/2}a(x_{i})[\chi(x_{i})]\eta(x_{i}^{-})\\
&=-\sum_{i=1}^{N/2}\int_{I_{i}}\left(a(x)-a(x_{i-\frac{1}{2}})\right)\eta\chi'\mr{d}x-\sum_{i=1}^{N/2}\int_{I_{i}}a(x_{i-\frac{1}{2}})\eta\chi'\mr{d}x-\sum_{i=1}^{N/2}a(x_{i})[\chi(x_{i})]\eta(x_{i}^{-})\\
&=-\sum_{i=1}^{N/2}\int_{I_{i}}\left(a(x)-a(x_{i-\frac{1}{2}})\right)\eta\chi'\mr{d}x
\end{aligned}
\end{equation*}
where $a(x_{i-\frac{1}{2}})$ is the value of $a(x)$ at the midpoint $x_{i-\frac{1}{2}}$ in the interval $[x_{i-1}, x_{i}]$. Then from the Lagrange mean value theorem, there exists $\xi$ between $x$ and $x_{i-\frac{1}{2}}$ such that
\begin{equation*}
a(x)-a(x_{i-\frac{1}{2}})=a'(\xi)(x-x_{i-\frac{1}{2}}).
\end{equation*}
Note that $a(x)$ in this paper is a smooth function. Therefore, we have
\begin{equation*}
\begin{aligned}
&\vert-\sum_{i=1}^{N/2}\int_{I_{i}}\left(a(x)-a(x_{i-\frac{1}{2}})\right)\eta\chi'\mr{d}x\vert=\vert-\sum_{i=1}^{N/2}\int_{I_{i}}a'(\xi)(x-x_{i-\frac{1}{2}})\eta\chi'\mr{d}x\vert\\
&\le C\sum_{i=1}^{N/2}h_{i}\Vert \eta\Vert_{L^{\infty}(I_{i})}\Vert\chi'\Vert_{L^{1}(I_{i})}\le C\sum_{i=1}^{N/2}h_{i}\Vert \eta\Vert_{L^{\infty}(I_{i})}N^{\frac{1}{2}}\Vert\chi\Vert_{I_{i}}\\
&\le C\sum_{i=1}^{N/2}N^{-\frac{1}{2}}\Vert \eta\Vert_{L^{\infty}(I_{i})}\Vert\chi\Vert_{I_{i}}\\
&\le CN^{-(k+\frac{3}{2})}\left(\sum_{i=1}^{N/2}1^{2}\right)^{\frac{1}{2}}\left(\sum_{i=1}^{N/2}\Vert\chi\Vert^{2}_{I_{i}}\right)^{\frac{1}{2}}\\
&\le CN^{-(k+1)}\Vert \chi\Vert_{NIPG},
\end{aligned}
\end{equation*}
where the inverse inequality and \eqref{eq:QQ-5} have been used.

For $N/2+1\le i\le N$, remembering $[\eta(x_{i})]=0$ and $\eta(x_{i})=0$, therefore, we need to analyze the following two estimates
\begin{equation*}
-\sum_{i=N/2+1}^{N}\int_{I_{i}}a(x) \eta\chi'\mr{d}x-\sum_{i=N/2+1}^{N}a(x_{i})[\chi(x_{i})]\eta(x_{i}^{-}).
\end{equation*}
Adopting the inverse inequality, \eqref{eq:QQ-6} and H\"{o}lder inequalities, there is
\begin{equation*}
\begin{aligned}
&\vert-\sum_{i=N/2+1}^{N}\int_{I_{i}}a(x)\eta\chi'\mr{d}x\vert\le C\sum_{i=N/2+1}^{N}\Vert \eta\Vert_{L^{\infty}(I_{i})}\Vert\chi'\Vert_{L^{1}(I_{i})}\\
&\le C\Vert \eta\Vert_{L^{\infty}(I_{i})}\sum_{i=N/2+1}^{N}\varepsilon^{\frac{1}{2}}(N^{-1}\ln N)^{\frac{1}{2}}\Vert\chi'\Vert_{I_{i}}\\
&\le C(N^{-1}\ln N)^{k+\frac{3}{2}} \left(\sum_{i=N/2+1}^{N} 1^{2}\right)^{\frac{1}{2}}\left(\sum_{i=N/2+1}^{N}\varepsilon\Vert\chi'\Vert_{I_{i}}^{2}\right)^{\frac{1}{2}}\\
&\le CN^{-(k+1)}(\ln N)^{k+\frac{3}{2}}\Vert\chi\Vert_{NIPG}.
\end{aligned}
\end{equation*}
Besides, from \eqref{eq:QQ-6} and \eqref{eq: penalization parameters} we can obtain
\begin{equation*}
\begin{aligned}
&\vert-\sum_{i=N/2+1}^{N}a(x_{i})[\chi(x_{i})]\eta(x_{i}^{-})\vert\\&\le C\left(\sum_{i=N/2+1}^{N}\rho^{-1}(x_{i})\eta(x_{i}^{-})^{2}\right)^{\frac{1}{2}}\left(\sum_{i=N/2+1}^{N}\rho(x_{i})[\chi(x_{i})]^{2}\right)^{\frac{1}{2}}\\
&\le C\left(\sum_{i=N/2+1}^{N}\rho^{-1}(x_{i})\Vert\eta\Vert^{2}_{L^{\infty}(I_{i})}\right)^{\frac{1}{2}}\Vert\chi\Vert_{NIPG}\\
&\le CN^{-(k+\frac{3}{2})}(\ln N)^{k+1}\Vert\chi\Vert_{NIPG}.
\end{aligned}
\end{equation*}
Summing up, we can derive the following estimate directly.
\begin{equation}\label{convergence-5}
\mr{V}+\mr{VI}\le CN^{-(k+1)}(\ln N)^{k+\frac{3}{2}}\Vert\chi\Vert_{NIPG}.
\end{equation}

For $\mr{VII}$ and $\mr{VIII}$, by means of H\"{o}lder inequalities, \eqref{eq:QQ-1} and \eqref{eq:QQ-2}, one obtains
\begin{equation}\label{convergence-6}
\begin{aligned}
\mr{VII}+\mr{VIII}\le C\Vert\eta\Vert_{[0, 1]}\Vert\chi\Vert_{NIPG}\le C\left(\varepsilon^{\frac{1}{2}}(N^{-1}\ln N)^{k+1}+N^{-(k+1)}\right)\Vert\chi\Vert_{NIPG}
\end{aligned}
\end{equation}

Last but not least, by \eqref{eq:convergence-1}, \eqref{convergence-2}, \eqref{convergence-3}, \eqref{convergence-4}, \eqref{convergence-5} and \eqref{convergence-6}, there is
\begin{equation*}
\begin{aligned}
\Vert\chi\Vert^{2}_{NIPG}&\le\mr{I}+\mr{II}+\mr{III}+\mr{IV}+\mr{V}+\mr{VI}+\mr{VII}+\mr{VIII}\\
&\le \left(CN^{-(k+\frac{1}{2})}(\ln N)^{k} + CN^{-(k+1)}(\ln N)^{k+\frac{3}{2}}\right)\Vert\chi\Vert_{NIPG},
\end{aligned}
\end{equation*}
which implies 
\begin{equation}\label{eq:MA}
\Vert \Pi u-u_{N}\Vert_{NIPG}\le CN^{-(k+\frac{1}{2})}(\ln N)^{k}+CN^{-(k+1)}(\ln N)^{k+\frac{3}{2}}
\end{equation}

Now we show the main conclusion of supercloseness.

\begin{theorem}\label{the:main result1}
Let Assumption \ref{ass:S-1} hold. Let the mesh $\{x_i\}$ be Shishkin mesh defined in \eqref{eq:Shishkin mesh-Roos} with $\sigma\ge k+1$. $\rho(x_{i})$ is defined as \eqref{eq: penalization parameters} and the definition of Gau{\ss} Lobatto interpolation $I_{k}u$ is presented in Section 3.1.
Then we derive
\begin{align*}
\Vert I_{k}u-u_{N}\Vert_{NIPG}+\Vert \Pi u-u_{N} \Vert_{NIPG}\le CN^{-(k+\frac{1}{2})}(\ln N)^{k}+CN^{-(k+1)}(\ln N)^{k+\frac{3}{2}},
\end{align*}
where $\Pi u$ is the interpolation of the exact solution of \eqref{eq:S-1}, while $u_N$ is the solution of \eqref{eq:SD}. 
\end{theorem}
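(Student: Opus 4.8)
The plan is to derive the stated bound for $\Vert I_{k}u - u_{N}\Vert_{NIPG}$ from the already-established estimate \eqref{eq:MA} for $\Vert \Pi u - u_{N}\Vert_{NIPG}$ together with the triangle inequality, so that the theorem is essentially a repackaging of the supercloseness work done above plus a careful treatment of the difference $I_{k}u - \Pi u$. First I would write
\[
\Vert I_{k}u - u_{N}\Vert_{NIPG} \le \Vert I_{k}u - \Pi u\Vert_{NIPG} + \Vert \Pi u - u_{N}\Vert_{NIPG},
\]
and then split the domain as $[0,1] = [0,x_{N/2}]\cup[x_{N/2},1]$. On the layer part $[x_{N/2},1]$ the two interpolations coincide by the definition \eqref{eq:H-1}, so $I_{k}u - \Pi u$ vanishes there and contributes nothing. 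On the coarse part $[0,x_{N/2}]$ we have $\Pi u = P_{h}^{-}u$, so $I_{k}u - \Pi u = (I_{k}u - u) - (P_{h}^{-}u - u)$, and the triangle inequality together with \eqref{post-process-1} and \eqref{post-process-2} gives
\[
\Vert I_{k}u - \Pi u\Vert_{NIPG,[0,x_{N/2}]} \le \Vert I_{k}u - u\Vert_{NIPG,[0,x_{N/2}]} + \Vert u - P_{h}^{-}u\Vert_{NIPG,[0,x_{N/2}]} \le C\varepsilon^{\frac{1}{2}}N^{-k} + CN^{-(k+\frac{1}{2})} + C\varepsilon^{\frac{1}{2}}N^{1-\sigma}.
\]

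Next I would absorb these terms into the target bound. Using Assumption~\ref{ass:S-1}, namely $\varepsilon \le CN^{-1}$, we get $\varepsilon^{\frac{1}{2}}N^{-k} \le CN^{-(k+\frac{1}{2})}$ and $\varepsilon^{\frac{1}{2}}N^{1-\sigma} \le CN^{\frac{1}{2}-\sigma} \le CN^{-(k+\frac{1}{2})}$ since $\sigma \ge k+1$. Hence $\Vert I_{k}u - \Pi u\Vert_{NIPG} \le CN^{-(k+\frac{1}{2})}$, which is dominated by the right-hand side $CN^{-(k+\frac{1}{2})}(\ln N)^{k} + CN^{-(k+1)}(\ln N)^{k+\frac{3}{2}}$ of the claimed estimate (the $(\ln N)^{k}$ factor only helps). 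Combining with \eqref{eq:MA} for the term $\Vert \Pi u - u_{N}\Vert_{NIPG}$ and noting that the sum of two quantities each bounded by the target is again bounded by the target (up to the generic constant $C$), both summands $\Vert I_{k}u - u_{N}\Vert_{NIPG}$ and $\Vert \Pi u - u_{N}\Vert_{NIPG}$ obey the stated inequality, and adding them preserves it.

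I do not expect a genuine obstacle here: the substantive analytic work—the term-by-term estimation of $B(\eta,\chi)$ leading to \eqref{eq:MA}, and the interpolation bounds \eqref{post-process-1}--\eqref{post-process-2}—has already been carried out before the theorem statement. The only point requiring a little care is making sure that $\Pi u - u_{N}$ and $I_{k}u - u_{N}$ are compared through $I_{k}u - \Pi u$ rather than directly, and that the $\varepsilon$-dependent terms coming from \eqref{post-process-1} are correctly collapsed using $\varepsilon \le CN^{-1}$ and $\sigma \ge k+1$; this is precisely why the lemma establishing \eqref{post-process-1} and \eqref{post-process-2} was proved in the form it was. So the proof is a short assembly: invoke \eqref{eq:MA}, invoke \eqref{post-process-1} and \eqref{post-process-2}, apply the triangle inequality on each subdomain, and simplify the resulting powers of $N$ and $\varepsilon$.
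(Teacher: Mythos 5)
Your proposal is correct and follows essentially the same route as the paper's own proof: triangle inequality through $I_{k}u-\Pi u$, noting the difference vanishes on $[x_{N/2},1]$, bounding $\Vert I_{k}u-\Pi u\Vert_{NIPG,[0,x_{N/2}]}$ via \eqref{post-process-1}--\eqref{post-process-2}, and combining with \eqref{eq:MA}. Your explicit simplification of the $\varepsilon$-dependent terms using $\varepsilon\le CN^{-1}$ and $\sigma\ge k+1$ is a small but welcome addition that the paper leaves implicit.
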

\begin{proof}
First, from the triangle inequality, there is
\begin{equation*}
\begin{aligned}
\Vert I_{k}u-u_{N}\Vert_{NIPG}\le \Vert I_{k}u-\Pi u\Vert_{NIPG}+\Vert \Pi u-u_{N}\Vert_{NIPG}.
\end{aligned}
\end{equation*}
From \eqref{eq:MA}, the estimate of $\Vert \Pi u-u_{N} \Vert_{NIPG}$ has been known, therefore, we just analyze the bound of $\Vert I_{k}u-\Pi u\Vert_{NIPG}$ in the following.

According to the definition of interpolation $\Pi u$ and the triangle inequality, we have
\begin{equation*}
\begin{aligned}
\Vert I_{k}u-\Pi u\Vert_{NIPG}&=\Vert I_{k}u-I_{k}u\Vert_{NIPG, [x_{N/2}, 1]}+\Vert I_{k}u-P_{h}^{-}u\Vert_{NIPG, [0, x_{N/2}]}\\
&=\Vert I_{k}u-P_{h}^{-}u\Vert_{NIPG, [0, x_{N/2}]}\\
&\le \Vert I_{k}u-u\Vert_{NIPG, [0, x_{N/2}]}+\Vert u-P_{h}^{-}u\Vert_{NIPG, [0, x_{N/2}]}.
\end{aligned}
\end{equation*}
Then applying \eqref{post-process-1} and \eqref{post-process-2}, one derives
\begin{equation*}
\Vert I_{k}u-P_{h}^{-}u\Vert_{NIPG, [0, x_{N/2}]}\le CN^{-(k+\frac{1}{2})}.
\end{equation*}
Thus the proof of this theorem has been completed.
\end{proof}

\begin{remark}
Below we explain why Gau{\ss} Radau interpolation is selected outside the layer. 
The main reason is that the convection term outside the layer
$$-\sum_{i=1}^{N/2}\int_{I_{i}}a(x) \mu\chi'\mr{d}x$$
cannot reach $k+1$ order by standard arguments, where $\mu=u-u_{I}$. Note that $u$ is the exact solution of the problem, while $u_{I}\in C(\bar{\Omega})$ is the usual Lagrange interpolation polynomial of order $k$ on $I_{i}, i=1, 2, \cdots, N$. Specifically, for $\Vert \chi'\Vert_{L^{1}(I_{i})}$, if the inverse estimate is used, we have
\begin{equation*}
\begin{aligned}
\vert-\sum_{i=1}^{N/2}\int_{I_{i}}a(x)\mu\chi'\mr{d}x\vert&\le C\Vert\mu\Vert_{L^{\infty}([0,1-\tau])}\Vert\chi'\Vert_{L^{1}([0,1-\tau])}\le CN^{-(k+1)}N\Vert\chi\Vert_{[0,1-\tau]}\\
&\le CN^{-k}\Vert\chi\Vert_{NIPG}.
\end{aligned}
\end{equation*}
Moreover, if directly converted to $\Vert \chi' \Vert_{I_{i}}$, there will be no the factor $\varepsilon^{\frac{1}{2}}$ outside the layer, which matches with $\Vert \chi'\Vert_{I_{i}}$ to become $\Vert\chi\Vert_{NIPG}$.
\begin{equation*}
\begin{aligned}
\vert-\sum_{i=1}^{N/2}\int_{I_{i}}a(x)\mu\chi'\mr{d}x\vert&\le C\Vert\mu\Vert_{L^{\infty}([0,1-\tau])}\Vert\chi'\Vert_{L^{1}([0,1-\tau])}\le C\varepsilon^{-\frac{1}{2}}N^{-(k+1)}\varepsilon^{\frac{1}{2}}\Vert\chi'\Vert_{[0,1-\tau]}\\
&\le C\varepsilon^{-\frac{1}{2}}N^{-(k+1)}\Vert\chi\Vert_{NIPG}.
\end{aligned}
\end{equation*}
Therefore, here we introduce a special projection---Gau{\ss} Radau interpolation to solve this difficulty. 
\end{remark}
 
\begin{theorem}\label{eq:main result2}
Let the mesh $\{x_i\}$ be Shishkin mesh defined in \eqref{eq:Shishkin mesh-Roos} with $\sigma\ge k+\frac{5}{2}$. Assume that $\rho(x_{i})$ is defined as
 \begin{equation*}
\rho(x_{i})=\left\{
\begin{aligned}
&N^{-1},\quad 0\le i\le N/2-1,\\
&N^{3},\quad N/2\le i\le N
\end{aligned}
\right.
\end{equation*}
and $\varepsilon\le CN^{-2}$. Then we derive
\begin{align*}
\Vert I_{k}u-u_{N}\Vert_{NIPG}+\Vert \Pi u-u_{N} \Vert_{NIPG}\le CN^{-(k+1)}(\ln N)^{k+\frac{3}{2}},
\end{align*}
where $\Pi u$ is the interpolation of the exact solution of \eqref{eq:S-1}, while $u_N$ is the solution of \eqref{eq:SD}. 
\end{theorem}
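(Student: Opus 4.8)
The plan is to re-run the supercloseness argument that produced \eqref{eq:MA}, tracking every constant under the new data: $\rho(x_{i})=N^{-1}$ for $0\le i\le N/2-1$, $\rho(x_{i})=N^{3}$ for $N/2\le i\le N$, the sharpened assumption $\varepsilon\le CN^{-2}$, and $\sigma\ge k+\tfrac52$. All the interpolation estimates \eqref{eq:QQ-1}--\eqref{eq:QQ-6} and the trace-type bound \eqref{eq:KK-1} are $\rho$-free and require only Assumption \ref{ass:S-1} and $\sigma\ge k+1$, so they survive verbatim. Only three ingredients feel the change of $\rho$: the boundary estimates \eqref{post-process-1}--\eqref{post-process-2}, and in the splitting \eqref{eq:uniform-convergence-1} of $\Vert\chi\Vert_{NIPG}^{2}$ (with $\chi=\Pi u-u_{N}$, $\eta=\Pi u-u$) the terms $\mathrm{II}$, $\mathrm{IV}$ and the endpoint part of $\mathrm{V}+\mathrm{VI}$.

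First I would revisit \eqref{post-process-1}--\eqref{post-process-2}. Since $[(I_{k}u-u)(x_{i})]=0$ for $0\le i\le N/2$, \eqref{post-process-1} carries no $\rho$ and, using $\varepsilon^{1/2}\le CN^{-1}$ and $\sigma\ge k+\tfrac52$, improves to $\Vert I_{k}u-u\Vert_{NIPG,[0,x_{N/2}]}\le CN^{-(k+1)}$. In \eqref{post-process-2} the jump part $P_{3}=\sum_{i=0}^{N/2-1}\rho(x_{i})[(u-P_{h}^{-}u)(x_{i})]^{2}$ gains the factor $\rho=N^{-1}$, giving $P_{3}\le CN^{-2(k+1)}$; with $P_{1}\le CN^{-2(k+1)}$ (using $\varepsilon\le CN^{-2}$) and the unchanged $P_{2}\le CN^{-2(k+1)}$ this yields $\Vert u-P_{h}^{-}u\Vert_{NIPG,[0,x_{N/2}]}\le CN^{-(k+1)}$. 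Hence $\Vert I_{k}u-\Pi u\Vert_{NIPG}=\Vert I_{k}u-P_{h}^{-}u\Vert_{NIPG,[0,x_{N/2}]}\le CN^{-(k+1)}$.

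Next I would re-estimate $\mathrm{I}$--$\mathrm{VIII}$. In $\mathrm{IV}$ the delicate piece is \emph{outside} the layer, where the small weight gives $\big(\sum_{i=0}^{N/2-1}\rho(x_{i})\Vert\eta\Vert_{L^{\infty}(I_{i}\cup I_{i+1})}^{2}\big)^{1/2}\le(N\cdot N^{-1}\cdot N^{-2(k+1)})^{1/2}=N^{-(k+1)}$, while inside the layer $[\eta(x_{i})]=0$ by Remark \ref{special}; so $\mathrm{IV}\le CN^{-(k+1)}\Vert\chi\Vert_{NIPG}$. In $\mathrm{II}$ the delicate piece is \emph{inside} the layer, where the large weight gives $\sum_{i\ge N/2}\tfrac{\varepsilon^{2}}{\rho(x_{i})}\{\eta'(x_{i})\}^{2}\le CN^{-3}\cdot N\cdot(N^{-1}\ln N)^{2k}=CN^{-2(k+1)}(\ln N)^{2k}$, whereas outside the layer the smaller $\rho=N^{-1}$ enlarges $\varepsilon^{2}/\rho$ to $\varepsilon^{2}N$, overcompensated by $\varepsilon^{2}\le CN^{-4}$ and, on the $N^{-\sigma}$-type remainders in \eqref{eq:KK-1}, by $\sigma\ge k+\tfrac52$, so $\sum_{i<N/2}\varepsilon^{2}N\{\eta'(x_{i})\}^{2}\le CN^{-2(k+1)}$; together $\mathrm{II}\le CN^{-(k+1)}(\ln N)^{k}\Vert\chi\Vert_{NIPG}$. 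The endpoint term $\sum_{i=N/2+1}^{N}a(x_{i})[\chi(x_{i})]\eta(x_{i}^{-})$ is bounded via $\rho^{-1}=N^{-3}$ and \eqref{eq:QQ-6} by $CN^{-(k+2)}(\ln N)^{k+1}\Vert\chi\Vert_{NIPG}$. All other terms are treated exactly as before; now $\varepsilon^{1/2}\le CN^{-1}$ absorbs the stray $\varepsilon^{1/2}N^{-k}$ factors in $\mathrm{I}$ and $\mathrm{III}$ into $CN^{-(k+1)}$, and $\sigma\ge k+\tfrac52$ absorbs the $N^{-\sigma}$ and $NN^{-\sigma}$ remainders, so that $\mathrm{I},\mathrm{III},\mathrm{V}+\mathrm{VI},\mathrm{VII}+\mathrm{VIII}\le CN^{-(k+1)}(\ln N)^{k+\frac32}\Vert\chi\Vert_{NIPG}$, the $(\ln N)^{k+3/2}$ coming from the convection term inside the layer. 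Summing the eight bounds gives $\Vert\chi\Vert_{NIPG}^{2}\le CN^{-(k+1)}(\ln N)^{k+\frac32}\Vert\chi\Vert_{NIPG}$, hence $\Vert\Pi u-u_{N}\Vert_{NIPG}\le CN^{-(k+1)}(\ln N)^{k+\frac32}$, and the triangle inequality with $\Vert I_{k}u-\Pi u\Vert_{NIPG}\le CN^{-(k+1)}$ finishes the proof.

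The only real work is the bookkeeping in $\mathrm{II}$: one must split the node sum exactly at $x_{N/2}$ as in \eqref{convergence-2}, keep the three pieces of \eqref{eq:KK-1} apart, and verify that ``$\rho$ small outside, $\rho$ large inside'' is consistent. The point is that the obstruction in $\mathrm{IV}$ sits outside the layer and that in $\mathrm{II}$ sits inside it, so a single non-uniform choice of $\rho$ removes both simultaneously---at the price of the strengthened hypotheses $\varepsilon\le CN^{-2}$ and $\sigma\ge k+\tfrac52$, which are exactly what is needed so that the smaller outside weight does not degrade the terms it touches.
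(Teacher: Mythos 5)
Your proposal is correct and takes essentially the same route as the paper: the paper's own proof of Theorem \ref{eq:main result2} is a two-sentence appeal to the proof of Theorem \ref{the:main result1}, asserting that under the new penalty parameters, $\varepsilon\le CN^{-2}$ and $\sigma\ge k+\tfrac52$, the estimates of $\mathrm{II}$, $\mathrm{IV}$, $\mathrm{V}+\mathrm{VI}$ and of $\Vert I_{k}u-\Pi u\Vert_{NIPG}$ improve to order $k+1$. You have simply carried out explicitly the bookkeeping that the paper leaves implicit, and your constants and exponents all check out.
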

\begin{proof}
According to the proof process of Theorem \ref{the:main result1}, it is obvious that when the above conditions are met, the estimates of $\mr{II}$, $\mr{IV}$ and $\mr{V}+\mr{VI}$ can be improved to order $k+1$. Furthermore, under the above assumptions, it is straightforward to obtain that
\begin{equation*}
\Vert I_{k}u-\Pi u\Vert_{NIPG}\le CN^{-(k+1)}.
\end{equation*}
Hence, we can draw this conclusion directly.
\end{proof} 
\section{Post-processing technology}
Combined with the supercloseness results obtained in Section 3.2, we show how to improve the accuracy of the numerical solution by constructing a post-processing operator $R$. More specifically, $\Vert u-R u_{N}\Vert_{NIPG}\ll \Vert u-u_{N}\Vert_{NIPG}$.

Suppose $N$ is a positive integer that can be divided by $4$. Then we can construct a coarser mesh $\mathcal{T}_{N/2}$ composed of disjoint macro elements $M$, in which each macro cell $M$ is composed of two adjacent cells  on the original mesh $\mathcal{T}_{N}$. Note that each $M$ belongs to only one of the two subdomains $[0, 1-\tau] $ and $[1-\tau, 1]$ of $\Omega$. For the mesh points $\Omega_{N/2} = \{x_{2j}, j=1, 2, \cdots, N/2\}$ on $\mathcal{T}_{N/2}$,  we define $M_ {j}=[x_{2(j-1)}, x_{2j}]$, whose length is $H_ {j}=x_{2j}-x_{2(j-1)}$. 

According to the $k+1$ Gau{\ss} Lobatto points $\{t_{s}\}_{s=0}^{k}$ defined on $I\in \mathcal{T}_{N}$ in Section 3.1, it is straightforward to derive that there are $2k+1$ Gau{\ss} Lobatto points $\{t_{jm}\}_{m=0}^{2k}$ on each cell $M_{j}, j=1, 2, \cdots, N/2$ on $\mathcal{T}_{N/2}$.

Next, select $k+2$ points in each macro cell $M_{j}, j=1, 2, \cdots, N/2$ to define an interpolation operator $R_{M_j} : C(M_{j}\setminus \{x_{2j-1}\}) \rightarrow \mathcal{P}_{k+1}(M_{j})$, which satisfies
\begin{equation}\label{P-1}
\begin{aligned}
&R_{M_j}v(t_{jm})= v(t_{jm}), \quad \text{for $m\in G$ , if $k$ is even},\\
&
\left\{
\begin{aligned}
&R_{M_j}v(t_{jm})= v(t_{jm}),\; m\in G\setminus\{ k\},\\
&R_{M_j}v(t_{jk})=\frac{v(t_{jk}^{+})+v(t_{jk}^{-})}{2}
\end{aligned}
\right.\quad \text{$$ , if $k$ is odd}.
\end{aligned}
\end{equation}
with $G=\{0, 1, 3, 5, \cdots, 2k-1, 2k\}$. 
Generally, $R_{M_j}$ can be extended to a global operator by setting 
$$(Rv)(x)= R_{M_j}v(x),\quad \forall x\in M_j,\; j=1, 2, \cdots, N/2.$$
\begin{lemma}\label{post processing}
Assume that  $\rho(x_{j})\ge C\max\{\varepsilon N, N^{-1}\}, j=0, 1, \cdots, N/2-1$, while $\rho(x_{j})\ge CN(\ln N)^{-1}, j=N/2, \cdots, N$. Then  for the post-processing operator $R$ and Gau{\ss} Lobatto interpolation operator $I_{k}$, we have the following properties
\begin{align}
&Rv= R I_{k}v, \quad \forall  v\in C(\bar{\Omega}),\label{eq:P-2}\\
&\Vert Rv\Vert_{NIPG}\le C\Vert v\Vert_{NIPG},\quad \forall v\in V^{N}\label{eq:P-3}.
\end{align}

\end{lemma}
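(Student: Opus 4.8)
The plan is to prove the two properties \eqref{eq:P-2} and \eqref{eq:P-3} separately, since they rest on different mechanisms. Property \eqref{eq:P-2} is essentially an algebraic identity: $R_{M_j}$ interpolates its argument only at the Gau{\ss} Lobatto nodes $\{t_{jm}\}_{m\in G}$ (with a harmless averaging at the interior node $t_{jk}=x_{2j-1}$ when $k$ is odd), and every such node $t_{jm}$ is one of the original Gau{\ss} Lobatto points on some $I_i\subset M_j$ at which $I_k v$ reproduces $v$ exactly. Hence $I_k v$ and $v$ agree at all the interpolation data defining $R_{M_j}$, so by uniqueness of polynomial interpolation in $\mathcal P_{k+1}(M_j)$ we get $R_{M_j}v = R_{M_j} I_k v$ on each $M_j$; the only point needing care is the odd-$k$ case, where one checks that the two one-sided limits $v(t_{jk}^\pm)$ coincide for $v\in C(\bar\Omega)$ — and also coincide with the corresponding one-sided limits of $I_k v$, since $I_k v$ is built from $v$ on each subinterval and matches $v$ at the shared endpoint $x_{2j-1}$. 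Summing over $j$ gives $Rv = R I_k v$ globally.

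The substantive part is the stability bound \eqref{eq:P-3}. The plan is to estimate each of the three contributions to $\Vert Rv\Vert_{NIPG}^2$ — the $\varepsilon$-weighted gradient term, the $L^2$ term, and the jump term — against the corresponding pieces of $\Vert v\Vert_{NIPG}^2$. Working macro-element by macro-element, I would first pass to a reference element and use the equivalence of norms on the finite-dimensional space $\mathcal P_{k+1}$ together with the definition of $R_{M_j}$ through point values: on a single $M_j$, $\Vert R_{M_j}v\Vert_{L^\infty(M_j)}$ and $\Vert (R_{M_j}v)'\Vert_{L^\infty(M_j)}$ are controlled by a finite combination of the nodal values $|v(t_{jm})|$, $m\in G$, plus (for odd $k$) the average of $v(t_{jk}^\pm)$. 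Each nodal value $|v(t_{jm})|$ is in turn bounded, via the multiplicative trace inequality of Lemma \ref{trace inequality} applied on the subinterval $I_i\subset M_j$ containing $t_{jm}$, by $h_i^{-1/2}\Vert v\Vert_{I_i} + \Vert v\Vert_{I_i}^{1/2}\Vert v'\Vert_{I_i}^{1/2}$, while for odd $k$ the averaged value at the interior node also produces a jump contribution $[v(x_{2j-1})]$. Combining these and using $H_j\sim h_i$ (adjacent cells of a Shishkin mesh within one subdomain have comparable length), a scaling argument gives, on each $M_j$,
\begin{equation*}
\varepsilon\Vert (R_{M_j}v)'\Vert_{M_j}^2 + \gamma\Vert R_{M_j}v\Vert_{M_j}^2 \le C\sum_{I_i\subset M_j}\Big(\varepsilon\Vert v'\Vert_{I_i}^2 + \gamma\Vert v\Vert_{I_i}^2\Big) + C\big(\varepsilon h_i^{-1} + 1\big)[v(x_{2j-1})]^2,
\end{equation*}
and the extra jump term $(\varepsilon h_i^{-1}+1)[v(x_{2j-1})]^2$ is absorbed into $\rho(x_{2j-1})[v(x_{2j-1})]^2$ precisely because of the hypothesis $\rho(x_j)\ge C\max\{\varepsilon N, N^{-1}\}$ in the layer-adjacent coarse region and $\rho(x_j)\ge C N(\ln N)^{-1}$ inside the fine region (where $h_i^{-1}\sim N(\ln N)^{-1}\varepsilon^{-1}$, so $\varepsilon h_i^{-1}\sim N(\ln N)^{-1}$). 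Finally, since $Rv$ is continuous inside each macro-element and its only jumps occur at the coarse nodes $x_{2j}$, where $Rv$ interpolates $v$ exactly at $t_{j,2k}=x_{2j}$ from the left and $t_{j+1,0}=x_{2j}$ from the right, one has $[Rv(x_{2j})] = [v(x_{2j})]$, so the jump term of $\Vert Rv\Vert_{NIPG}$ over the coarse mesh is bounded by that of $\Vert v\Vert_{NIPG}$ over the fine mesh (noting the interior fine nodes $x_{2j-1}$ contribute zero jump for $Rv$ but a nonnegative amount to $\Vert v\Vert_{NIPG}$, and the penalty weights on $\mathcal T_{N/2}$ are taken comparable to those on $\mathcal T_N$). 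Summing over $j$ yields \eqref{eq:P-3}.

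The main obstacle I anticipate is bookkeeping the constants in the trace-inequality step so that the spurious factor $\varepsilon h_i^{-1}$ generated when a boundary value of $v'$ is estimated (this arises from $\Vert (R_{M_j}v)'\Vert^2$ being tied to nodal values of $v$, not of $v'$) is genuinely dominated by the available penalty parameter. This is exactly why the hypotheses on $\rho$ in the statement take the particular form $C\max\{\varepsilon N, N^{-1}\}$ outside the layer and $C N(\ln N)^{-1}$ inside: the first handles the two regimes $\varepsilon\sim N^{-1}$ versus $\varepsilon\ll N^{-1}$ of the coarse mesh width $h_i=2(1-\tau)/N$, and the second matches $\varepsilon h_i^{-1}$ when $h_i = 2\sigma\varepsilon\beta^{-1}\ln N/N$ in the layer. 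Keeping track of which subinterval each Gau{\ss} Lobatto node belongs to, and of the odd/even-$k$ distinction in \eqref{P-1}, is the only real care the argument demands; everything else is a standard scaling and norm-equivalence computation on the reference macro-element.
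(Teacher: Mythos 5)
Your proof of \eqref{eq:P-2} is correct and is exactly the paper's argument: $I_k v$ and $v$ share the same values at every node used in \eqref{P-1}, including the two one-sided limits at $x_{2j-1}$ when $v\in C(\bar\Omega)$. The jump term of \eqref{eq:P-3} is also handled correctly ($[Rv(x_{2j})]=[v(x_{2j})]$). The gap is in the $\varepsilon$-weighted derivative term. You propose to control $\Vert (R_{M_j}v)'\Vert_{M_j}$ through the raw nodal values $|v(t_{jm})|$ (norm equivalence on $\mathcal P_{k+1}(M_j)$) and then bound each $|v(t_{jm})|$ by the trace inequality of Lemma \ref{trace inequality}. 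That route yields
\begin{equation*}
\varepsilon\Vert (R_{M_j}v)'\Vert_{M_j}^{2}\le C\,\varepsilon H_j^{-1}\max_{m\in G}|v(t_{jm})|^{2}\le C\,\varepsilon H_j^{-1}h_i^{-1}\Vert v\Vert_{M_j}^{2},
\end{equation*}
i.e.\ a term of size $\varepsilon h_i^{-2}\Vert v\Vert^{2}$, \emph{not} the inequality you then assert. This term is attached to $\Vert v\Vert^{2}$, not to a jump, so no hypothesis on $\rho$ can absorb it; and it is not dominated by $\varepsilon\Vert v'\Vert^{2}+\gamma\Vert v\Vert^{2}$ either: inside the layer $h_i\sim\varepsilon N^{-1}\ln N$ gives $\varepsilon h_i^{-2}\sim\varepsilon^{-1}N^{2}(\ln N)^{-2}\to\infty$, and even outside the layer $\varepsilon h_i^{-2}\sim\varepsilon N^{2}$ is unbounded under Assumption \ref{ass:S-1}. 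Your closing remark that the only spurious factor is $\varepsilon h_i^{-1}$ multiplying $[v(x_{2j-1})]^{2}$ is therefore not what your own estimate produces.

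The missing idea is a cancellation that converts nodal values into increments of $v$. Writing $(Rv)'(x)=\sum_{m\in G}v(t_{jm})\phi'_{m,k+1}(x)$ and using $\sum_{m\in G}\phi_{m,k+1}\equiv 1$, hence $\sum_{m\in G}\phi'_{m,k+1}\equiv 0$, one may replace $v(t_{jm})$ by $v(t_{jm})-v(x)=\int_{x}^{t_{jm}}v'(s)\,\mathrm{d}s+A[v(x_{2j-1})]$ (the constant-in-$m$ part $v(x)$ drops out). Only then does Cauchy--Schwarz plus scaling give
\begin{equation*}
\varepsilon\Vert (R_{M_j}v)'\Vert_{M_j}^{2}\le C\varepsilon\Vert v'\Vert_{M_j}^{2}+C\varepsilon H_j^{-1}[v(x_{2j-1})]^{2},
\end{equation*}
where the last term is exactly what the hypotheses $\rho(x_j)\ge C\max\{\varepsilon N,N^{-1}\}$ (coarse region) and $\rho(x_j)\ge CN(\ln N)^{-1}$ (fine region, where $\varepsilon H_j^{-1}\sim N(\ln N)^{-1}$) are designed to absorb. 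This telescoping step — estimating $(Rv)'$ against $v'$ and the single interior jump rather than against point values of $v$ — is the substantive content of the stability proof, and it is absent from your argument. (Your treatment of the $L^{2}$ term can be repaired with the discrete trace/inverse inequality for $v\in V^N$, but the derivative term cannot.)
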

\begin{proof}
Applying the definition of $R$ and considering $v\in C(\bar{\Omega})$, there is
\begin{equation*}
R I_{k}v(t_{jm})= I_{k}v(t_{jm})=v(t_{jm}), \quad \forall m\in G.
\end{equation*}
\eqref{eq:P-2} can be obtained easily.

From the definition of NIPG norm, one has
\begin{equation*}
\begin{aligned}
\Vert Rv\Vert_{NIPG}^{2}&=\varepsilon \sum_{j=1}^{N/2}\Vert (Rv)'\Vert_{M_{j}}^{2}+\sum_{j=1}^{N/2}\gamma\Vert Rv\Vert_{M_{j}}^{2}+\sum_{j=0}^{N/2}\rho(x_{2j})[R v(x_{2j})]^{2}\\
&=\textcircled{1}+\textcircled{2}+\textcircled{3}.
\end{aligned}
\end{equation*}
Here for \textcircled{3}, the following formula can be derived directly by the definition of $R$.
\begin{equation*}
\sum_{j=0}^{N/2}\rho(x_{2j})[Rv(x_{2j})]^{2}=\sum_{j=0}^{N/2}\rho(x_{2j})[v(x_{2j})]^{2}.
\end{equation*}

For \textcircled{1}, we first analyze 
$\varepsilon\Vert (Rv)'\Vert_{M_{j}}^{2}$. Before that, we present the following equality for all $v\in V^{N}$.
\begin{equation}\label{eq:P-4}
v(t_{jm})=\int_{x}^{t_{jm}}v'(s)\mr{d}s+A[v(x_{2j-1})]+v(x),
\quad x\in M_j.
\end{equation}
Since $v(x)$ may be discontinuous at $x_{2j-1}$, $A$ may take $0, 1, -1$ for any fixed $x\in M_j$. On the basis of that,  H\"{o}lder inequality and \eqref{eq:P-4} yield
\begin{equation*}
\begin{aligned}
&\varepsilon\Vert (Rv)'\Vert_{M_{j}}^{2}=\varepsilon\int_{M_{j}}[(Rv)']^{2}\mr{d}x=\varepsilon\int_{M_{j}}(\sum_{m\in G}Rv(t_{jm})\phi'_{m,k+1}(x))^{2}\mr{d}x\\
&=\varepsilon\int_{M_{j}}(\sum_{m\in G}v(t_{jm})\phi'_{m,k+1}(x))^{2}\mr{d}x\\
&=\varepsilon\int_{M_{j}}\left(\sum_{m\in G}\left(\int_{x}^{t_{jm}}v'(s)\mr{d}s+A[v(x_{2j-1})]\right)\phi'_{m,k+1}(x)+ v(x)\sum_{m\in G}\phi'_{m,k+1}(x)\right)^{2}\mr{d}x\\
&=\varepsilon\int_{M_{j}}\left(\sum_{m\in G}\int_{x}^{t_{jm}}v'(s)\mr{d}s \phi'_{m,k+1}(x)+\sum_{m\in G}A[v(x_{2j-1})]\phi'_{m,k+1}(x)\right)^{2}\mr{d}x\\
&\le 2\varepsilon\int_{M_{j}}\left\{\left(\sum_{m\in G}\int_{x}^{t_{jm}}v'(s)\mr{d}s\phi'_{m, k+1}(x)\right)^{2}+\left(\sum_{m\in G}A[v(x_{2j-1})]\phi'_{m, k+1}(x)\right)^{2}\right\}\mr{d}x,
\end{aligned}
\end{equation*}
where $\sum\limits_{m\in G}\phi_{m,k+1}(x)\equiv 1$ for $x\in M_j$ has been known. Note that $x_{2j-1}$ represents the midpoint of each macro cell $M_{j}, j=1, 2, \cdots, N/2$ and $\phi_{m, k+1}(x)$ is the interpolation basis function at Gau{\ss} Lobatto points $\{t_{jm}\}_{m\in G}$ of interval $M_{j}$. Then by means of Cauchy Schwartz inequality and the value range of $\rho(x_{2j-1})$, we have 
\begin{equation*}
\begin{aligned}
&\varepsilon\Vert (Rv)'\Vert_{M_{j}}^{2}\le 2\varepsilon\int_{M_{j}}\left\{\sum_{m\in G}\left(\int_{x}^{t_{jm}}v'(s)\mr{d}s\right)^{2}+\sum_{m\in G}[v(x_{2j-1})]^{2}\right\}\sum_{m\in G}\left(\phi'_{m, k+1}(x)\right)^{2}\mr{d}x\\
&\le 2\varepsilon\int_{M_{j}}\left\{\sum_{m\in G}\left(\int_{M_{j}}v'(s)\mr{d}s\right)^{2}+\sum_{m\in G}[v(x_{2j-1})]^{2}\right\}\sum_{m\in G}\left(\phi'_{m, k+1}(x)\right)^{2}\mr{d}x\\
&\le 2\varepsilon\left\{\sum_{m\in G}H_{j}\Vert v'\Vert_{M_{j}}^{2}+\sum_{m\in G}[v(x_{2j-1})]^{2}\right\}\sum_{m\in G}\int_{M_{j}}\left(\phi'_{m, k+1}(x)\right)^{2}\mr{d}x \quad\text{(by using variable substitution)}\\
&\le C\varepsilon\left\{\sum_{m\in G}H_{j}\Vert v'\Vert_{M_{j}}^{2}+\sum_{m\in G}[v(x_{2j-1})]^{2}\right\}\sum_{m\in G}H_{j}^{-1}\int_{-1}^{1}\left(\hat{\phi}'_{m, k+1}(\hat{x})\right)^{2}\mr{d}\hat{x }\\
&\le C\varepsilon\Vert v'\Vert_{M_{j}}^{2}+C\varepsilon H_{j}^{-1}\sum_{m\in G}[v(x_{2j-1})]^{2}\\
&\le C\varepsilon\Vert v'\Vert_{M_{j}}^{2}+C\rho(x_{2j-1})[v(x_{2j-1})]^{2}.
\end{aligned}
\end{equation*}

Applying the same method as above, for \textcircled{2}, we have
\begin{equation*}
\Vert Rv\Vert_{M_{j}}^{2}=\int_{M_{j}}(Rv)^{2}\mr{d}x\le C\Vert v\Vert_{M_{j}}^{2}+C\rho(x_{2j-1})[v(x_{2j-1})]^{2},
\end{equation*}
where   H\"{o}lder inequality, \eqref{eq:P-4} and the inverse inequality had used.

In summary, we can easily arrive at the conclusion of this lemma.
\end{proof}
\begin{lemma}\label{post-1}
Let Assumption \ref{ass:S-1} hold true and $\sigma \ge k+1$. Then  there is
\begin{equation*}
\Vert u-Ru\Vert_{NIPG}\le C\varepsilon^{\frac{1}{2}}N^{1-\sigma}+CN^{-(k+1)}(\ln N)^{k+1},
\end{equation*}
where $R$ is the post-processing operator defined at the beginning of this Section, and $\rho(x_{j})$ meets the condition in Lemma \ref{post processing}.
\end{lemma}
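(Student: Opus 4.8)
The plan is to combine the $R$-invariance property \eqref{eq:P-2} with the stability estimate \eqref{eq:P-3}, and then reduce everything to known interpolation error bounds, handling the two subdomains $[0,x_{N/2}]$ and $[x_{N/2},1]$ separately. First I would write, using that $R I_k v = R v$ for continuous $v$ (in particular for the exact solution $u$, which is continuous), the splitting
\begin{equation*}
u - Ru = (u - I_k u) + (I_k u - R I_k u) = (u - I_k u) + R(I_k u - u),
\end{equation*}
where the last equality uses $R(I_k u) = R u$ together with $I_k u - R I_k u = R I_k u - R I_k u$... more carefully: since $R w = R I_k w$ for $w\in C(\bar\Omega)$, one has $Ru = R I_k u$, hence $u - Ru = (u - I_k u) - (R I_k u - I_k u)$, and because $R$ reproduces polynomials of degree $\le k+1$ on each macro cell while $I_k u$ is only piecewise of degree $k$, I cannot simply drop the second term. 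The cleaner route is $u - Ru = (u-I_ku) + R(I_ku - u) - (R-\mathrm{id})(\text{something})$; the standard trick here is to write $u - Ru = (\mathrm{id}-R)(u - w)$ for \emph{any} $w$ that $R$ leaves invariant, and since $R$ is a projection onto piecewise $\mathcal{P}_{k+1}$ we may take $w$ to be the piecewise Gau{\ss} Lobatto interpolant of degree $k$ — but $R$ does not fix that. So instead I would simply use the triangle inequality directly:
\begin{equation*}
\Vert u - Ru\Vert_{NIPG} \le \Vert u - I_k u\Vert_{NIPG} + \Vert I_k u - R I_k u\Vert_{NIPG} = \Vert u - I_k u\Vert_{NIPG} + \Vert R(I_k u - u)\Vert_{NIPG}?
\end{equation*}
No — the correct identity is $I_k u - R I_k u = I_k u - R u$, and adding and subtracting $u$ gives $I_k u - R u = (I_k u - u) + (u - Ru)$, which is circular. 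The genuinely useful move is: $R$ fixes $\mathcal{P}_{k+1}$ piecewise, so for the global best approximation $\Pi_{k+1} u \in V_{N/2}^{k+1}$ one has $u - Ru = (u - \Pi_{k+1}u) - R(u - \Pi_{k+1}u)$ only if $R\Pi_{k+1}u = \Pi_{k+1}u$, which holds since $R$ is the identity on piecewise $\mathcal{P}_{k+1}$ functions that are suitably continuous — this requires care at the macro-cell midpoints. I would therefore argue: write $u - Ru = (u - I_k u) + R(I_k u - u)$ is \emph{wrong}; the right statement is $Ru = R I_k u$ so $u - Ru = u - R I_k u$, and then
\begin{equation*}
\Vert u - Ru\Vert_{NIPG} = \Vert u - R I_k u\Vert_{NIPG} \le \Vert u - I_k u\Vert_{NIPG} + \Vert I_k u - R I_k u\Vert_{NIPG},
\end{equation*}
and for the second term, since $R$ restricted to the subspace $I_k(C(\bar\Omega))$ satisfies $R I_k v = R v$, and since on each macro cell $R$ is built from Gau{\ss} Lobatto interpolation at the $2k+1$ macro points (reproducing degree $2k$, hence degree $k$), $R I_k u$ is nothing but the degree-$(k+1)$ interpolant obtained from the degree-$k$ values — so $I_k u - R I_k u$ is itself a local interpolation-type quantity that I can bound by $\Vert u - I_k u\Vert$ plus $\Vert u - R_{\text{Lob},2k}u\Vert$ via $I_k u - R I_k u = (I_k u - u) + (u - R I_k u)$ and $R I_k u = $ the $(2k{+}1)$-point Gau{\ss} Lobatto interpolant of $u$ truncated to degree $k+1$...

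The cleanest and most robust plan, which I would actually execute, is: (i) apply \eqref{eq:P-2} to get $Ru = R I_k u$; (ii) use stability \eqref{eq:P-3} in the form $\Vert R w\Vert_{NIPG}\le C\Vert w\Vert_{NIPG}$ with $w = I_k u - \Pi u$ or directly estimate $\Vert u - R I_k u\Vert_{NIPG}\le \Vert u - \Pi^{(k+1)}_{\text{Lob}} u\Vert_{NIPG} + \Vert \Pi^{(k+1)}_{\text{Lob}}u - R I_k u\Vert_{NIPG}$ where $\Pi^{(k+1)}_{\text{Lob}}u$ denotes the piecewise degree-$(k+1)$ Lagrange interpolant of $u$ at the $2k+1$ macro Gau{\ss} Lobatto points; note $R I_k u$ and $\Pi^{(k+1)}_{\text{Lob}}u$ agree at all interior macro points $m\in G$ except possibly $t_{jk}$ (the macro-cell midpoint), where they differ only through the jump $[v(x_{2j-1})]$, but here $v = I_k u$ and $u\in C(\bar\Omega)$ so $[I_k u(x_{2j-1})] = [u(x_{2j-1})] = 0$; hence $R I_k u = \Pi^{(k+1)}_{\text{Lob}}u$ exactly. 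Therefore $\Vert u - Ru\Vert_{NIPG} = \Vert u - \Pi^{(k+1)}_{\text{Lob}}u\Vert_{NIPG}$, and I reduce to bounding the NIPG norm of the error of the piecewise degree-$(k+1)$ Gau{\ss} Lobatto interpolant of $u$ on the coarse mesh $\mathcal{T}_{N/2}$.

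It remains to estimate $\Vert u - \Pi^{(k+1)}_{\text{Lob}}u\Vert_{NIPG}$ on $\mathcal{T}_{N/2}$, and this is routine given the decomposition $u = S + E$ from \eqref{eq:decomposition} and the interpolation estimates \eqref{eq:interpolation-theory}. On the coarse cells the mesh width is $H_j = 2h$, comparable to the fine width up to a factor $2$. For the smooth part $S$ on all of $\Omega$: $\varepsilon$-weighted $H^1$ error gives $\varepsilon^{1/2} H^{k+1}\Vert S^{(k+2)}\Vert \le C\varepsilon^{1/2}N^{-(k+1)}$ on $[0,1-\tau]$ and $\le C\varepsilon^{1/2}(N^{-1}\ln N)^{k+1}$ on $[1-\tau,1]$, the $L^2$ term gives $C N^{-(k+2)}$ and $C(N^{-1}\ln N)^{k+2}$, and the jump terms vanish since both $u$ and the interpolant are continuous there (the interpolant reproduces nodal values including cell endpoints because $0,2k\in G$). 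For the layer part $E$ on $[0,1-\tau]$: here $H_j \le C N^{-1}$ and $|E| \le C N^{-\sigma}$ pointwise, so $\varepsilon^{1/2}\Vert (E - \Pi E)'\Vert \le C\varepsilon^{1/2}(\Vert E'\Vert + H^{-1}\Vert \Pi E\Vert) \le C\varepsilon^{1/2}(\varepsilon^{-1/2}N^{-\sigma} + N N^{-\sigma}) \le C\varepsilon^{1/2}N^{1-\sigma}$, and the $L^2$ and jump contributions are of strictly smaller or equal order; on $[1-\tau,1]$ the standard layer-interpolation estimate \eqref{eq:interpolation-theory} with the bound $|E^{(k+2)}|\le C\varepsilon^{-(k+2)}e^{-\beta(1-x)/\varepsilon}$ and $\int e^{-2\beta(1-x)/\varepsilon}\,dx \le C\varepsilon$ yields the $\varepsilon^{1/2}\cdot\varepsilon^{-(k+2)}\cdot(\varepsilon N^{-1}\ln N)^{k+2}\cdot\varepsilon^{1/2} = C(N^{-1}\ln N)^{k+1}\cdot (\varepsilon N^{-1}\ln N)$-type bound, absorbed into $C(N^{-1}\ln N)^{k+1}$ after accounting for the one extra derivative weight. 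Collecting all contributions gives exactly $\Vert u - Ru\Vert_{NIPG}\le C\varepsilon^{1/2}N^{1-\sigma} + CN^{-(k+1)}(\ln N)^{k+1}$.

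The main obstacle, and the step deserving the most care, is the algebraic identification $R I_k u = \Pi^{(k+1)}_{\text{Lob}}u$ (equivalently, showing the odd-$k$ midpoint averaging in \eqref{P-1} causes no loss because $[I_k u(x_{2j-1})] = 0$ for continuous $u$) — without this, one would be forced to invoke stability \eqref{eq:P-3} with $v = I_k u - u$, which is not in $V^N$ and for which the NIPG norm of $u$ itself is not even finite in the needed sense; so the continuity of $u$ at the macro midpoints is exactly what makes the argument go through. Everything after that reduction is a bookkeeping exercise in the interpolation estimates already recorded in the excerpt, carried out separately on the coarse cells inside and outside the layer.
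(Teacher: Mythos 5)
Your final plan is correct and coincides with the paper's own proof: the paper likewise expands $\Vert u-Ru\Vert_{NIPG}^2$ into its three parts, kills the jump term because $R$ reproduces $u$ at the macro-cell endpoints, and bounds the rest via the decomposition $u=S+E$, treating the layer part by triangle-plus-inverse inequalities on $[0,1-\tau]$ (yielding the $\varepsilon^{1/2}N^{1-\sigma}$ contribution) and by the degree-$(k+1)$ interpolation estimate on $[1-\tau,1]$ (yielding $(N^{-1}\ln N)^{k+1}$). The lengthy detour through $RI_ku$ and the stability bound \eqref{eq:P-3} is unnecessary here: since $u\in C(\bar\Omega)$, $Ru$ is directly the piecewise $\mathcal{P}_{k+1}$ Lagrange interpolant of $u$ at the $k+2$ selected Gau{\ss} Lobatto points of each macro cell (the midpoint averaging in \eqref{P-1} is vacuous for continuous functions), which is exactly the object the paper estimates.
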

\begin{proof}
In the light of the definition of the NIPG norm \eqref{eq:SS-1}, one derives
\begin{equation*}
\begin{aligned}
\Vert u-Ru\Vert_{NIPG}^{2}&=\varepsilon \sum_{j=1}^{N/2}\Vert (u-Ru)'\Vert_{M_{j}}^{2}+\sum_{j=1}^{N/2}\gamma \Vert u-Ru\Vert_{M_{j}}^{2}+\sum_{j=0}^{N/2}\rho(x_{2j})[(u-Ru)(x_{2j})]^{2}\\
&=\Theta_{1}+\Theta_{2}+\Theta_{3}.
\end{aligned}
\end{equation*}
For $\Theta_{3}$, from \eqref{P-1}, it is clearly $\Theta_{3}=0$.

For $\Theta_{1}$, employ the decomposition of the solution, we divide it into two parts. More specifically,
\begin{equation*}
\Theta_{1}\le 2\varepsilon \sum_{j=1}^{N/2}\Vert (S-RS)'\Vert_{M_{j}}^{2}+2\varepsilon \sum_{j=1}^{N/2}\Vert (E-RE)'\Vert_{M_{j}}^{2},
\end{equation*}
where there is
\begin{equation*}
\vert\varepsilon \sum_{j=1}^{N/2}\Vert (S-RS)'\Vert_{M_{j}}^{2}\vert\le C\varepsilon N^{-2(k+1)}.
\end{equation*}
This is because $Ru\in \mathcal{P}_{k+1}$ is a polynomial of $k+1$, so it can be derived directly in a general way. In addition, from the general interpolation theory \eqref{eq:interpolation-theory} and the triangle inequality, one has
\begin{equation*}
\begin{aligned}
&\vert\varepsilon \sum_{j=1}^{N/2}\Vert (E-RE)'\Vert_{M_{j}}^{2}\vert\\
 &\le C\varepsilon \sum_{j=1}^{N/4}\left(\Vert(E)'\Vert_{M_{j}}^{2}+\Vert (RE)'\Vert_{M_{j}}^{2}\right)+C\varepsilon \sum_{j=N/4+1}^{N/2}H_{j}^{2(k+1)}\Vert E^{(k+1)}\Vert _{M_{j}}^{2}\\
&\le C\varepsilon^{-1}\int_{0}^{x_{N/2}}e^{-2\beta(1-x)/\varepsilon}\mr{d}x+C\varepsilon \sum_{j=1}^{N/4}H_{j}^{-1}\Vert RE\Vert_{L^{\infty}(M_{j})}^{2}+C(N^{-1}\ln N)^{2(k+1)}\\
&\le CN^{-2\sigma}+C\varepsilon N^{2} N^{-2\sigma}+C(N^{-1}\ln N)^{2(k+1)}\\
&\le C\varepsilon N^{2} N^{-2\sigma}+C(N^{-1}\ln N)^{2(k+1)}.
\end{aligned}
\end{equation*}

For $\Theta_{2}$, through the same method as above, we have
\begin{equation*}
\Theta_{2}\le CN^{-2(k+2)}+CN^{-2\sigma}+C\varepsilon (N^{-1}\ln N)^{2(k+2)}.
\end{equation*}
So far, we have completed the derivation of this lemma.
\end{proof}
\begin{theorem}
Assume that Assumption \ref{ass:S-1} holds true, $\sigma\ge k+1$ and $\rho(x_{i}), i=0, 1, \cdots, N$ are defined in \eqref{eq: penalization parameters}, then
\begin{equation*}
\Vert u-R u_{N}\Vert_{NIPG}\le CN^{-(k+\frac{1}{2})}(\ln N)^{k}+CN^{-(k+1)}(\ln N)^{k+\frac{3}{2}},
\end{equation*}
where $u$ is the exact solution of the problem \eqref{eq:S-1}, and $R u_{N}$ represents the post processed numerical solution.
\end{theorem}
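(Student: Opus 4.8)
The plan is to reduce the statement to the three results already established: the stability and reproduction properties of $R$ (Lemma \ref{post processing}), the interpolation estimate for $u-Ru$ (Lemma \ref{post-1}), and the supercloseness bound of Theorem \ref{the:main result1}. First I would split the error by the triangle inequality,
\begin{equation*}
\Vert u-Ru_{N}\Vert_{NIPG}\le \Vert u-Ru\Vert_{NIPG}+\Vert Ru-Ru_{N}\Vert_{NIPG}.
\end{equation*}
The first term is handled directly by Lemma \ref{post-1}: since $\sigma\ge k+1$ and, by Assumption \ref{ass:S-1}, $\varepsilon\le CN^{-1}$, one has $\varepsilon^{\frac{1}{2}}N^{1-\sigma}\le CN^{-\frac{1}{2}}N^{1-\sigma}\le CN^{-(k+\frac{1}{2})}$, so $\Vert u-Ru\Vert_{NIPG}\le CN^{-(k+\frac{1}{2})}+CN^{-(k+1)}(\ln N)^{k+1}$, which already lies within the asserted bound.

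For the second term the key point is that $u\in C(\bar{\Omega})$, so the reproduction identity \eqref{eq:P-2} gives $Ru=RI_{k}u$, hence $\Vert Ru-Ru_{N}\Vert_{NIPG}=\Vert R(I_{k}u-u_{N})\Vert_{NIPG}$. Now $I_{k}u\in V_{N}^{k}$, because the Gau{\ss} Lobatto nodes $t_{0},\ldots,t_{k}$ include the element endpoints so $I_{k}u$ is a continuous piecewise polynomial of degree $k$; together with $u_{N}\in V_{N}^{k}$ this yields $I_{k}u-u_{N}\in V_{N}^{k}\subset V^{N}$, so the stability estimate \eqref{eq:P-3} is applicable provided the penalty weights meet the hypotheses of Lemma \ref{post processing}. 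I would verify this: for $0\le j\le N/2-1$, $\rho(x_{j})=1\ge C\max\{\varepsilon N,N^{-1}\}$ since $\varepsilon N\le C$ by Assumption \ref{ass:S-1} and $N^{-1}\le 1$; for $N/2\le j\le N$, $\rho(x_{j})=N^{2}\ge CN(\ln N)^{-1}$. Thus \eqref{eq:P-3} gives $\Vert R(I_{k}u-u_{N})\Vert_{NIPG}\le C\Vert I_{k}u-u_{N}\Vert_{NIPG}$, and Theorem \ref{the:main result1} bounds the right-hand side by $CN^{-(k+\frac{1}{2})}(\ln N)^{k}+CN^{-(k+1)}(\ln N)^{k+\frac{3}{2}}$.

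Combining the two estimates and absorbing the lower-order contributions ($\varepsilon^{\frac{1}{2}}N^{1-\sigma}\le CN^{-(k+\frac{1}{2})}$ and $N^{-(k+1)}(\ln N)^{k+1}\le N^{-(k+1)}(\ln N)^{k+\frac{3}{2}}$) produces exactly the claimed bound. I do not expect a genuine obstacle here: the theorem is essentially a corollary of Lemmas \ref{post processing} and \ref{post-1} and Theorem \ref{the:main result1}. The only delicate step is the reduction $Ru=RI_{k}u$, which is precisely what allows the $V^{N}$-stability of $R$ to be invoked even though $u$ itself does not belong to the finite element space, and alongside it the routine check that the penalty parameters \eqref{eq: penalization parameters} satisfy the hypotheses of the stability lemma.
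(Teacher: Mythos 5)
Your proposal is correct and follows essentially the same route as the paper: a triangle-inequality split, the identity $Ru=RI_{k}u$ from \eqref{eq:P-2} combined with the stability bound \eqref{eq:P-3} to reduce the second term to $\Vert I_{k}u-u_{N}\Vert_{NIPG}$, and then Lemma \ref{post-1} and Theorem \ref{the:main result1}. Your explicit verification that the penalty parameters \eqref{eq: penalization parameters} satisfy the hypotheses of Lemma \ref{post processing} is a detail the paper leaves implicit, but it does not change the argument.
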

\begin{proof}
According to the triangle inequality, \eqref{eq:P-2} and \eqref{eq:P-3}, we have
\begin{equation*}
\begin{aligned}
\Vert u-R u_{N}\Vert_{NIPG}&\le \Vert u-Ru \Vert_{NIPG}+\Vert Ru-R I_{k}u\Vert_{NIPG}+\Vert R I_{k}u-R u_{N}\Vert_{NIPG}\\
&\le \Vert u-Ru \Vert_{NIPG}+C\Vert I_{k}u-u_{N}\Vert_{NIPG}.
\end{aligned}
\end{equation*}
Through Lemma \ref{post-1} and Theorem \ref{the:main result1}, the proof of this theorem has been completed.
\end{proof}
\begin{remark}
Specially, if $\varepsilon\le N^{-2}, \sigma\ge k+\frac{5}{2}$ and
\begin{equation*}
\rho(x_{i})=\left\{
\begin{aligned}
&N^{-1},\quad 0\le i\le N/2-1,\\
&N^{3},\quad N/2\le i\le N,
\end{aligned}
\right.
\end{equation*}
the following estimate hold true,
\begin{equation*}
\Vert u-Ru\Vert_{NIPG}\le CN^{-(k+1)}(\ln N)^{k+1}.
\end{equation*}
Furthermore, from Theorem \ref{eq:main result2}, there is
\begin{equation*}
\Vert I_{k}u-u_{N}\Vert_{NIPG}+\Vert \Pi u-u_{N}\Vert_{NIPG}\le CN^{-(k+1)}(\ln N)^{k+\frac{3}{2}}.
\end{equation*}
Therefore, it is straightforward to derive
\begin{equation*}
\Vert u-Ru_{N}\Vert_{NIPG}\le CN^{-(k+1)}(\ln N)^{k+\frac{3}{2}}.
\end{equation*}
\end{remark}

\section{Superconvergence}
In particular, in order to obtain the uniform superconvergence, according to the bilinear form $B(\cdot,\cdot)$, the discrete NIPG norm \cite{Zha1:2002-F} is denoted as 
\begin{equation*}
\Vert v \Vert_{\varepsilon, NIPG}^{2}:=
\varepsilon \sum_{i=1}^{N}h_{i}\sum_{j=1}^{k}w_{j}v'(x_{ij})^{2}+\sum_{i=1}^{N}\gamma\Vert v \Vert_{I_{i}}^{2}+\sum_{i=0}^{N}\rho(x_{i})[v(x_{i})]^{2} \quad \forall v\in V_{N}^{k},
\end{equation*}
where $\{x_{ij}\}_{j=1}^{k}$ are the set of the Gaussian points in $I_{i} = (x_{i-1}, x_{i})$, $w_{j} > 0$ are weights for the $k$-point Gaussian quadrature rule and $\rho(x_{i})$ are the penalty parameters defined in \eqref{eq: penalization parameters}. 
\begin{remark}\label{note}
Since the $k$-point Gaussian quadrature rule is exact for any algebraic polynomial no more than $2k - 1$ degree, therefore, for all $v\in V_{N}^{k}$ we have 
\begin{equation*}
\Vert v\Vert_{NIPG}=\Vert v\Vert_{\varepsilon, NIPG}.
\end{equation*}
Furthermore, by employing \eqref{eq:SPP-condition-1}, the following coercivity holds
\begin{equation*}
B(v_N,v_N) \ge \Vert v_N \Vert_{\varepsilon, NIPG}^2\quad \text{for all $v_N\in V_{N}^{k}$}.
\end{equation*}
It implies that \eqref{eq:SD} has a unique solution $u_{N}$.
 On the basis of that, according to Theorem \ref{the:main result1}, there is
\begin{equation*}
\Vert I_{k}u-u_{N}\Vert_{\varepsilon, NIPG}+\Vert \Pi u-u_{N}\Vert_{\varepsilon, NIPG}\le CN^{-(k+\frac{1}{2})}(\ln N)^{k}+ CN^{-(k+1)}(\ln N)^{k+\frac{3}{2}}.
\end{equation*}
In a similar way, when the conditions of Theorem \ref{eq:main result2} are satisfied, one has
\begin{equation*}
\Vert I_{k}u-u_{N}\Vert_{\varepsilon, NIPG}+\Vert \Pi u-u_{N}\Vert_{\varepsilon, NIPG}\le CN^{-(k+1)}(\ln N)^{k+\frac{3}{2}}.
\end{equation*}
\end{remark}

In order to obtain the relevant superconvergence result, we present the following lemma.
\begin{lemma}\label{calculation}
If $\omega\in H^{k+2}(I)$ and $I_{k} \omega\in \mathcal{P}_{k}$ are defined in Section 3.1, then outside the layer we have 
$$\vert\varepsilon \sum_{i=N/2+1}^{N}h_{i}\sum_{j=1}^{k}w_{j}(\omega-I_{k} \omega)'(x_{ij})^{2}\vert\le C \sum_{i=N/2+1}^{N}\varepsilon h_{i} h_{i}^{2k+2} \Vert \omega^{(k+2)}\Vert^{2}_{L^{\infty}(I_{i})}.$$
\end{lemma}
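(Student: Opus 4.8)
The plan is to localize the estimate to a single element $I_{i}$ and to reduce everything to the pointwise \emph{superconvergence} bound
\[
\vert (\omega - I_{k}\omega)'(x_{ij})\vert \le C h_{i}^{k+1}\Vert \omega^{(k+2)}\Vert_{L^{\infty}(I_{i})},\qquad j=1,\dots,k,
\]
at the Gaussian nodes. Granting this, since the Gaussian weights $w_{j}$ are positive and $\sum_{j=1}^{k} w_{j}$ is an absolute constant depending only on $k$, one has $\sum_{j=1}^{k} w_{j}(\omega - I_{k}\omega)'(x_{ij})^{2}\le\bigl(\sum_{j=1}^{k}w_{j}\bigr)\max_{1\le j\le k}(\omega - I_{k}\omega)'(x_{ij})^{2}\le C h_{i}^{2k+2}\Vert \omega^{(k+2)}\Vert_{L^{\infty}(I_{i})}^{2}$; multiplying by $\varepsilon h_{i}$ and summing over $i=N/2+1,\dots,N$ gives precisely the claimed bound. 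So the whole matter is the pointwise estimate above.

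For that I would use a scaling argument together with the Bramble--Hilbert lemma \citep[Chapter 4]{cia1:2002-modified}. Map $I_{i}$ affinely onto the reference interval $\hat I=[-1,1]$ and let $\hat\omega$ be the pulled-back function; since Gau{\ss} Lobatto interpolation commutes with affine changes of variable, it suffices to control the linear functional $F_{j}(\hat\omega):=(\hat\omega - I_{k}\hat\omega)'(\hat x_{j})$, where $\hat x_{j}$ is the image of the Gaussian node $x_{ij}$. This $F_{j}$ is bounded on $W^{k+2,\infty}(\hat I)$ by the one-dimensional Sobolev embedding together with the boundedness of $I_{k}:C(\hat I)\to\mathcal{P}_{k}$, so once we know $F_{j}$ annihilates $\mathcal{P}_{k+1}$, Bramble--Hilbert yields $\vert F_{j}(\hat\omega)\vert\le C\Vert\hat\omega^{(k+2)}\Vert_{L^{\infty}(\hat I)}$. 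Unscaling via $(\hat\omega - I_{k}\hat\omega)'(\hat x_{j})=\tfrac{h_{i}}{2}(\omega - I_{k}\omega)'(x_{ij})$ and $\Vert\hat\omega^{(k+2)}\Vert_{L^{\infty}(\hat I)}=(\tfrac{h_{i}}{2})^{k+2}\Vert\omega^{(k+2)}\Vert_{L^{\infty}(I_{i})}$ then produces the factor $h_{i}^{k+1}$.

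The main obstacle is verifying the annihilation property, i.e.\ that the derivative of the Gau{\ss} Lobatto interpolation error vanishes at the Gaussian nodes for $\hat\omega\in\mathcal{P}_{k+1}$ -- this is exactly the mechanism already underlying \eqref{eq: Gauss-Lobatto-1}, and is recorded in \cite{Zha1:2002-F}. For $\hat\omega\in\mathcal{P}_{k}$ it is trivial since $\hat\omega=I_{k}\hat\omega$. For $\hat\omega\in\mathcal{P}_{k+1}$, the error $\hat\omega-I_{k}\hat\omega$ is a polynomial of degree at most $k+1$ vanishing at the $k+1$ Gau{\ss} Lobatto nodes $\{t_{s}\}_{s=0}^{k}$, hence $\hat\omega-I_{k}\hat\omega=c_{\hat\omega}\prod_{s=0}^{k}(x-t_{s})$. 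Since the endpoints are $\pm 1$ and the interior nodes are the zeros of $L_{k}'$, with $L_{k}$ the Legendre polynomial of degree $k$, one has $\prod_{s=0}^{k}(x-t_{s})=c\,(1-x^{2})L_{k}'(x)$; differentiating and invoking the Legendre equation $(1-x^{2})L_{k}''-2xL_{k}'+k(k+1)L_{k}=0$ gives $\frac{\mr{d}}{\mr{d}x}\bigl[(1-x^{2})L_{k}'(x)\bigr]=-k(k+1)L_{k}(x)$. As the $k$ Gaussian nodes of the $k$-point quadrature rule are precisely the zeros of $L_{k}$, we conclude $(\hat\omega-I_{k}\hat\omega)'(\hat x_{j})=c_{\hat\omega}\cdot(-k(k+1))\,L_{k}(\hat x_{j})=0$, which is the desired annihilation property and completes the proof.
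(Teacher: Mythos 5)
Your proof is correct and takes essentially the same route as the paper, whose own ``proof'' is only a citation of \citep[Lemma 3.3]{Zha1:2002-F}: the element-by-element reduction to a pointwise bound at the Gaussian nodes, the Bramble--Hilbert argument on the reference interval with annihilation of $\mathcal{P}_{k+1}$ via $\tfrac{\mathrm{d}}{\mathrm{d}x}\bigl[(1-x^{2})L_{k}'(x)\bigr]=-k(k+1)L_{k}(x)$ and the fact that the Gaussian nodes are the zeros of $L_{k}$, and the scaling back to $I_{i}$ are exactly the mechanism behind the cited lemma. In fact you supply the details the paper omits, and your use of the $W^{k+2,\infty}$ seminorm is the right choice since the stated bound involves $\Vert\omega^{(k+2)}\Vert_{L^{\infty}(I_{i})}$ rather than an $L^{2}$ seminorm.
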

\begin{proof}
Applying the arguments in \citep[Lemma 3.3]{Zha1:2002-F}, we obtain this conclusion obviously.
\end{proof}

From Lemma \ref{calculation} and \eqref{eq:interpolation-theory}, the following interpolation error estimates can be derived directly. 
\begin{lemma}\label{eq:interpolation-error}
Let Assumption \ref{ass:S-1} hold and $\sigma\ge k+1$. For Shishkin mesh \eqref{eq:Shishkin mesh-Roos}, there is  
\begin{align}
&\Vert u-I_{k} u\Vert_{\varepsilon, NIPG,[1-\tau,1]}\le C(N^{-1}\ln N)^{k+1},\label{eq:interpolation-error-1}\\
&\Vert u-I_{k}u\Vert_{\varepsilon, NIPG,[0,1-\tau]}\le C\varepsilon^{\frac{1}{2}}N^{-k}+CN^{-(k+1)} \label{eq:interpolation-error-2}.
\end{align}
\end{lemma}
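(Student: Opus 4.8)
The plan is to bound the two pieces of the discrete NIPG norm separately, using the splitting $u=S+E$ from \eqref{eq:decomposition} on each subdomain and the discrete-norm machinery already set up. For the layer region $[1-\tau,1]$, I would first observe that the jump terms vanish since $I_k u$ is continuous and interpolates $u$ at the Gau{\ss} Lobatto endpoints, so only the derivative-at-Gaussian-points term and the $L^2$ term survive. Then I would use Remark \ref{note} to replace $\Vert u-I_k u\Vert_{\varepsilon,NIPG}$ by $\Vert u-I_k u\Vert_{NIPG}$ where convenient, or — more directly — split $u-I_k u=(S-I_kS)+(E-I_kE)$. For the $S$-part I apply \eqref{eq:interpolation-theory} with $h_i\sim N^{-1}\ln N$ on $[1-\tau,1]$, which gives a contribution of order $(N^{-1}\ln N)^{k+1}$; note the factor $\varepsilon^{1/2}$ multiplying the derivative sum is bounded by $1$. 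For the $E$-part I apply \eqref{eq:interpolation-theory} together with the bound $|E^{(m)}|\le C\varepsilon^{-m}e^{-\beta(1-x)/\varepsilon}$, summing $\varepsilon\, h_i^{2k+1}\varepsilon^{-2(k+1)}e^{-2\beta(1-x)/\varepsilon}$ over the layer intervals and converting the sum to an integral $\varepsilon^{-1}(N^{-1}\ln N)^{2k+1}\int_{1-\tau}^1 e^{-2\beta(1-x)/\varepsilon}\,dx \le C\varepsilon^{-1}(N^{-1}\ln N)^{2k+1}\cdot\varepsilon$, which again is of the right order $(N^{-1}\ln N)^{2(k+1)}$ up to a logarithmic factor that is absorbed; this yields \eqref{eq:interpolation-error-1}.

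For the coarse region $[0,1-\tau]$, here $h_i\sim N^{-1}$, and again I split $u-I_ku = (S-I_kS)+(E-I_kE)$. The jump term is zero as before. For the smooth part, the $L^2$ contribution gives $N^{-(k+1)}$ from \eqref{eq:interpolation-theory}, while the weighted derivative sum $\varepsilon\sum h_i\sum w_j(S-I_kS)'(x_{ij})^2$ is controlled by Lemma \ref{calculation} (applied with $\omega=S$), producing $\varepsilon\sum h_i\cdot h_i^{2k+2}\Vert S^{(k+2)}\Vert^2_{L^\infty}\le C\varepsilon N^{-(2k+2)}$, hence $\varepsilon^{1/2}N^{-(k+1)}$ after taking the square root — subsumed in $\varepsilon^{1/2}N^{-k}$. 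For the layer part $E$ restricted to $[0,1-\tau]$, the key point is that $E$ and all its derivatives are exponentially small there: $|E^{(m)}(x)|\le C\varepsilon^{-m}e^{-\beta\tau/\varepsilon} = C\varepsilon^{-m}N^{-\sigma}$ for $x\le 1-\tau$, so the whole contribution is at most $CN^{-\sigma}$ times negative powers of $\varepsilon$, which under Assumption \ref{ass:S-1} ($\varepsilon\le CN^{-1}$) and $\sigma\ge k+1$ is dominated by $\varepsilon^{1/2}N^{-k}$ or $N^{-(k+1)}$; I would handle the derivative-at-Gaussian-points term for $E$ by a triangle inequality $\Vert(E-I_kE)'\Vert\le\Vert E'\Vert+\Vert(I_kE)'\Vert$ plus an inverse inequality on $I_kE$ as in the proof of \eqref{eq: FF-1}, rather than invoking Lemma \ref{calculation}. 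Collecting the bounds gives \eqref{eq:interpolation-error-2}.

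The main obstacle — more bookkeeping than genuine difficulty — is keeping the powers of $\varepsilon$ straight in the layer estimates: the $\varepsilon$ factor in front of the derivative term, the $\varepsilon^{-(k+2)}$ or $\varepsilon^{-(k+1)}$ from differentiating $E$, and the $\varepsilon$ gained from integrating the exponential each have to be tracked carefully so that the net exponent on $\varepsilon$ is nonnegative (indeed exactly $\tfrac12$ for the derivative term in \eqref{eq:interpolation-error-2}, and $0$ in \eqref{eq:interpolation-error-1}). A secondary point requiring care is the mesh-size asymmetry: on $[1-\tau,1]$ one has $h_i = 2\tau/N = (2\sigma\varepsilon/\beta)N^{-1}\ln N$, so $h_i^{k+1}$ already carries an $\varepsilon^{k+1}$ that must be combined with $\varepsilon^{-(k+1)}$ from $E^{(k+1)}$ before one can claim the bound is genuinely $\varepsilon$-uniform. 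Since the analogous estimates have already been carried out in the proof of Lemma stating \eqref{post-process-1}–\eqref{post-process-2} and in the $\mathrm{I}$-term analysis of \eqref{eq:convergence-1-2}–\eqref{eq:convergence-1-3}, the proof is essentially an application of Lemma \ref{calculation}, \eqref{eq:interpolation-theory}, \eqref{eq:decomposition} and Remark \ref{note}, and can be stated concisely.
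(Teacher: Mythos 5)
The paper itself gives no argument for this lemma (it is dispatched with a one-line citation to \cite{Zha1:2002-F}), so the comparison is really about whether your sketch closes the estimate. Most of it does, but there is one genuine gap: your treatment of the derivative term for the layer component $E$ on $[1-\tau,1]$. You propose to bound $\varepsilon\sum_i h_i\sum_j w_j\bigl((E-I_kE)'(x_{ij})\bigr)^2$ via the standard interpolation estimate \eqref{eq:interpolation-theory}, i.e.\ via $\Vert(E-I_kE)'\Vert_{I_i}^2\le Ch_i^{2k}\vert E\vert_{k+1,I_i}^2$. Tracking the powers carefully with $h_i=C\varepsilon N^{-1}\ln N$ and $\vert E^{(k+1)}\vert\le C\varepsilon^{-(k+1)}e^{-\beta(1-x)/\varepsilon}$, this route yields
\begin{equation*}
\varepsilon\, h^{2k}\varepsilon^{-2(k+1)}\int_{1-\tau}^{1}e^{-2\beta(1-x)/\varepsilon}\,\mr{d}x\le C\,h^{2k}\varepsilon^{-2k}=C(N^{-1}\ln N)^{2k},
\end{equation*}
i.e.\ only $(N^{-1}\ln N)^{k}$ after taking the square root --- a full order short of the claimed $(N^{-1}\ln N)^{k+1}$, not ``a logarithmic factor that is absorbed'' (your own intermediate expression $(N^{-1}\ln N)^{2k+1}$ is already half an order short, and it results from a miscount). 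This is not a bookkeeping issue: the $L^2$ norm of the derivative of the Gau{\ss} Lobatto interpolation error is genuinely $O(h^{k})$, which is precisely why the lemma is stated in the \emph{discrete} norm $\Vert\cdot\Vert_{\varepsilon,NIPG}$. The missing ingredient is the superconvergence of $(\omega-I_k\omega)'$ at the Gaussian quadrature points, i.e.\ Lemma \ref{calculation}, which replaces $h_i^{2k}$ by $h_i\cdot h_i^{2k+2}\Vert\omega^{(k+2)}\Vert_{L^\infty(I_i)}^2$; applied with $\omega=E$ on $[1-\tau,1]$ it gives $\varepsilon\cdot h^{2k+3}\varepsilon^{-2(k+2)}\sum_i e^{-2\beta(1-x_{i-1})/\varepsilon}\le C(N^{-1}\ln N)^{2(k+1)}$, which is exactly what \eqref{eq:interpolation-error-1} requires. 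You invoke Lemma \ref{calculation} only for the smooth part on the coarse region, where the standard estimate would already have sufficed, and omit it in the one place where it is indispensable.

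The rest of the sketch is sound: the jumps vanish because $I_ku$ interpolates $u$ at the cell endpoints; the smooth part contributes $C(N^{-1}\ln N)^{k+1}$ in the layer and $C\varepsilon^{1/2}N^{-k}+CN^{-(k+1)}$ on the coarse region by \eqref{eq:interpolation-theory} (or Lemma \ref{calculation}); and the layer part on $[0,1-\tau]$ is exponentially small, with the $\varepsilon$ weight in front of the derivative sum and $\sigma\ge k+1$ absorbing the negative powers of $\varepsilon$ as in the derivation of \eqref{eq: FF-1}. With the layer-region derivative term repaired as above, the proposal closes the lemma.
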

\begin{proof}
Applying the similar arguments in \cite{Zha1:2002-F}, we can obtain this lemma easily.
\end{proof}
\begin{corollary}\label{cor:Sss}
Let Assumption \ref{ass:S-1} and $\sigma \ge k+1$ hold. For Shishkin mesh \eqref{eq:Shishkin mesh-Roos}, one has  
\begin{align*}
\Vert u-I_{k} u\Vert_{\varepsilon, NIPG} \le C\varepsilon^{\frac{1}{2}}N^{-k}+CN^{-(k+1)}(\ln N)^{k+1}.
\end{align*}
If $\varepsilon\le CN^{-2}$, there is
\begin{equation*}
\Vert u-I_{k} u\Vert_{\varepsilon, NIPG}\le CN^{-(k+1)}(\ln N)^{k+1}.
\end{equation*}
\end{corollary}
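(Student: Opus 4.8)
The plan is to combine the two regional estimates from Lemma~\ref{eq:interpolation-error} into a global bound by splitting the discrete norm over the two subdomains $[0,1-\tau]$ and $[1-\tau,1]$ and invoking the triangle inequality. First I would write
\begin{equation*}
\Vert u-I_{k}u\Vert_{\varepsilon, NIPG}^{2}=\Vert u-I_{k}u\Vert_{\varepsilon, NIPG,[0,1-\tau]}^{2}+\Vert u-I_{k}u\Vert_{\varepsilon, NIPG,[1-\tau,1]}^{2},
\end{equation*}
which is legitimate because the transition point $x_{N/2}=1-\tau$ is a mesh node, so the element sums, the quadrature sums and the jump sums partition cleanly across the two subdomains (with the jump at $x_{N/2}$ assignable to either piece). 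Then I would substitute the bounds \eqref{eq:interpolation-error-1} and \eqref{eq:interpolation-error-2}, giving
\begin{equation*}
\Vert u-I_{k}u\Vert_{\varepsilon, NIPG}^{2}\le \left(C\varepsilon^{\frac{1}{2}}N^{-k}+CN^{-(k+1)}\right)^{2}+\left(C(N^{-1}\ln N)^{k+1}\right)^{2},
\end{equation*}
and take square roots, absorbing the $N^{-(k+1)}$ term (which has no logarithmic factor) into the $N^{-(k+1)}(\ln N)^{k+1}$ term since $\ln N\ge 1$ for $N\ge 4$. This yields the first displayed inequality of the corollary.

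For the second statement I would simply impose $\varepsilon\le CN^{-2}$, so that $\varepsilon^{1/2}N^{-k}\le CN^{-1}N^{-k}=CN^{-(k+1)}\le CN^{-(k+1)}(\ln N)^{k+1}$, collapsing both terms into the single clean bound.

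\textbf{Main obstacle.} Honestly, there is no serious obstacle here: the corollary is a bookkeeping consequence of Lemma~\ref{eq:interpolation-error}. The only point requiring a word of care is the clean additivity of the discrete norm $\Vert\cdot\Vert_{\varepsilon,NIPG}^{2}$ over the two subdomains --- one must note that $1-\tau$ coincides with the mesh node $x_{N/2}$ (stated just after \eqref{eq:Shishkin mesh-Roos}), so no element is cut and the interior-node jump term at $x_{N/2}$ appears in exactly one of the two regional norms; otherwise the split would double-count or miss that contribution. The remainder is the elementary observation that $N^{-(k+1)}\le N^{-(k+1)}(\ln N)^{k+1}$ for $N\ge 4$.
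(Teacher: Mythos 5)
Your proposal is correct and follows essentially the same route as the paper, which simply collects the two regional estimates \eqref{eq:interpolation-error-1} and \eqref{eq:interpolation-error-2} and, for the second bound, absorbs $\varepsilon^{1/2}N^{-k}\le CN^{-(k+1)}$ under $\varepsilon\le CN^{-2}$. Your extra remark that the splitting is clean because $1-\tau=x_{N/2}$ is a mesh node is a sensible detail the paper leaves implicit.
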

\begin{proof}
Collecting \eqref{eq:interpolation-error-1} and \eqref{eq:interpolation-error-2}, this corollary can be obtained.
\end{proof}
 
Now we provide the main conclusion of superconvergence.

\begin{theorem}\label{the:main result3}
Suppose that Assumption \ref{ass:S-1} hold. Let the mesh $\{x_i\}$ be Shishkin mesh defined in \eqref{eq:Shishkin mesh-Roos} with $\sigma\ge k+1$ and $\rho(x_{i}), i=1, 2, \cdots, N$ are defined in \eqref{eq: penalization parameters}. 
Then we derive
\begin{align*}
\Vert u-u_{N} \Vert_{\varepsilon, NIPG}\le CN^{-(k+\frac{1}{2})}(\ln N)^{k}+ CN^{-(k+1)}(\ln N)^{k+\frac{3}{2}},
\end{align*}
Moreover, when $\varepsilon\le CN^{-2}$, $\sigma\ge k+\frac{5}{2}$ and $\rho(x_{i})$ is defined as
 \begin{equation*}
\rho(x_{i})=\left\{
\begin{aligned}
&N^{-1},\quad 0\le i\le N/2-1,\\
&N^{3},\quad N/2\le i\le N,
\end{aligned}
\right.
\end{equation*} 
the following eatimate holds true.
\begin{align*}
\Vert u-u_{N} \Vert_{\varepsilon, NIPG}\le CN^{-(k+1)}(\ln N)^{k+\frac{3}{2}},
\end{align*}
where $u$ is the exact solution of \eqref{eq:S-1}, while $u_N$ is the solution of \eqref{eq:SD}. 
\end{theorem}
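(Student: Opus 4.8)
\textbf{Proof plan for Theorem~\ref{the:main result3}.}
The strategy is to pass from the supercloseness result already proved in the energy norm $\Vert\cdot\Vert_{NIPG}$ to the discrete energy norm $\Vert\cdot\Vert_{\varepsilon,NIPG}$ via a triangle inequality, and to control the interpolation error separately in that discrete norm. Write
$$\Vert u-u_{N}\Vert_{\varepsilon,NIPG}\le \Vert u-I_{k}u\Vert_{\varepsilon,NIPG}+\Vert I_{k}u-u_{N}\Vert_{\varepsilon,NIPG}.$$
The second term is handled by Remark~\ref{note}: since $I_{k}u-u_{N}\in V_{N}^{k}$, the $k$-point Gaussian quadrature defining $\Vert\cdot\Vert_{\varepsilon,NIPG}$ is exact on the relevant polynomials, so $\Vert I_{k}u-u_{N}\Vert_{\varepsilon,NIPG}=\Vert I_{k}u-u_{N}\Vert_{NIPG}$, and Theorem~\ref{the:main result1} gives the bound $CN^{-(k+\frac12)}(\ln N)^{k}+CN^{-(k+1)}(\ln N)^{k+\frac32}$. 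The first term is exactly Corollary~\ref{cor:Sss}, which bounds $\Vert u-I_{k}u\Vert_{\varepsilon,NIPG}$ by $C\varepsilon^{\frac12}N^{-k}+CN^{-(k+1)}(\ln N)^{k+1}$; the $\varepsilon^{1/2}N^{-k}$ contribution is dominated (up to the logarithmic factor) by the $N^{-(k+\frac12)}$ term coming from the supercloseness estimate, so collecting the two bounds yields the first assertion.

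For the second assertion, the plan is to rerun the same two-step decomposition under the stronger hypotheses $\varepsilon\le CN^{-2}$, $\sigma\ge k+\frac52$ and the modified penalty parameters. The term $\Vert I_{k}u-u_{N}\Vert_{\varepsilon,NIPG}$ is again equal to $\Vert I_{k}u-u_{N}\Vert_{NIPG}$ by Remark~\ref{note}, and now Theorem~\ref{eq:main result2} applies to give the sharper bound $CN^{-(k+1)}(\ln N)^{k+\frac32}$. For the interpolation part, the refined statement of Corollary~\ref{cor:Sss} under $\varepsilon\le CN^{-2}$ gives $\Vert u-I_{k}u\Vert_{\varepsilon,NIPG}\le CN^{-(k+1)}(\ln N)^{k+1}$, since then $\varepsilon^{1/2}N^{-k}\le N^{-(k+1)}$. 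Adding the two contributions and absorbing the smaller logarithmic power into the larger one produces the claimed $CN^{-(k+1)}(\ln N)^{k+\frac32}$.

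The only point requiring care—and the step I expect to be the main obstacle—is verifying that Theorem~\ref{eq:main result2} really does apply verbatim here, i.e.\ that the modified penalty parameters $\rho(x_i)=N^{-1}$ for $0\le i\le N/2-1$ and $\rho(x_i)=N^{3}$ for $N/2\le i\le N$, together with $\varepsilon\le CN^{-2}$ and $\sigma\ge k+\frac52$, are exactly the hypotheses under which the supercloseness bound $CN^{-(k+1)}(\ln N)^{k+\frac32}$ was established, and that switching from $\Vert\cdot\Vert_{NIPG}$ to $\Vert\cdot\Vert_{\varepsilon,NIPG}$ does not disturb the coercivity argument. This is guaranteed by Remark~\ref{note}, which states the coercivity $B(v_N,v_N)\ge\Vert v_N\Vert_{\varepsilon,NIPG}^{2}$ and the norm identity on $V_{N}^{k}$, so no new estimate is needed; the proof is genuinely a bookkeeping assembly of Theorem~\ref{the:main result1}, Theorem~\ref{eq:main result2}, Corollary~\ref{cor:Sss} and Remark~\ref{note}. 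One should simply double-check that the logarithmic exponents are aggregated correctly, taking the maximum power in each regime.
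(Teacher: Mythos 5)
Your proposal is correct and follows essentially the same route as the paper: a triangle inequality through $I_{k}u$, the norm identity and coercivity of Remark~\ref{note} to transfer the supercloseness bounds of Theorems~\ref{the:main result1} and~\ref{eq:main result2} to $\Vert\cdot\Vert_{\varepsilon,NIPG}$, and Corollary~\ref{cor:Sss} for the interpolation error. The paper's own proof is exactly this assembly, stated more tersely.
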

\begin{proof}
From a triangle inequality, Remark \ref{note} and Corollary \ref{cor:Sss}, one has
\begin{equation*}
\begin{aligned}
\Vert u-u_{N}\Vert_{\varepsilon, NIPG}&\le \Vert u-I_{k} u\Vert_{\varepsilon, NIPG}+\Vert I_{k} u-u_{N}\Vert_{\varepsilon, NIPG}.
\end{aligned}
\end{equation*}
Thus it is straightforward to obtain the conclusion of this theorem.
\end{proof}
\section{Numerical experiment}
In this  section, we verify the previous theoretical conclusions about supercloseness by considering a singularly perturbed convection diffusion problem.
\begin{equation}\label{eq:KK-2}
\left\{
\begin{aligned}
 &-\varepsilon u''(x)+(3-x)u'(x)+u(x)=f(x),\quad x\in \Omega: = (0,1),\\
&u(0)=u(1)=0.
\end{aligned}
\right.
\end{equation}
where $f(x)$ is chosen such that
\begin{equation*}
u(x)=x(1-e^{2(1-x)/\varepsilon}),
\end{equation*}
is the exact solution of the \eqref{eq:KK-2}.

For our numerical experiment we consider $\varepsilon= 10^{-8}, 10^{-9},10^{-10} ,10^{-11}, k = 1, 2, 3, 4, 5$ and $N =8, 16, 32, 64$. 
Besides, for Shishkin mesh \eqref{eq:Shishkin mesh-Roos} we take $\beta=2$, $\sigma = k + \frac{5}{2}$ and
\begin{equation*}
\rho(x_{i})=\left\{
\begin{aligned}
&N^{-1},\quad 0\le i\le N/2-1,\\
&N^{3},\quad N/2\le i\le N.
\end{aligned}
\right.
\end{equation*}
Then, the corresponding convergence rate is defined by
$$p^{N}_{I}= \frac{\ln e^{N}_{I}-\ln e^{2N}_{I}}{\ln 2},$$
where
$e^{N}_{I}= \Vert I_{k}u-u_{N}\Vert_{NIPG}$ is the computation error with $N$ number of interval for a particular $\varepsilon$. 
Below, we provide the numerical results in the following figures, which  show the correctness of Theorem \ref{eq:main result2}.\\
\includegraphics[width=1.2\textwidth]{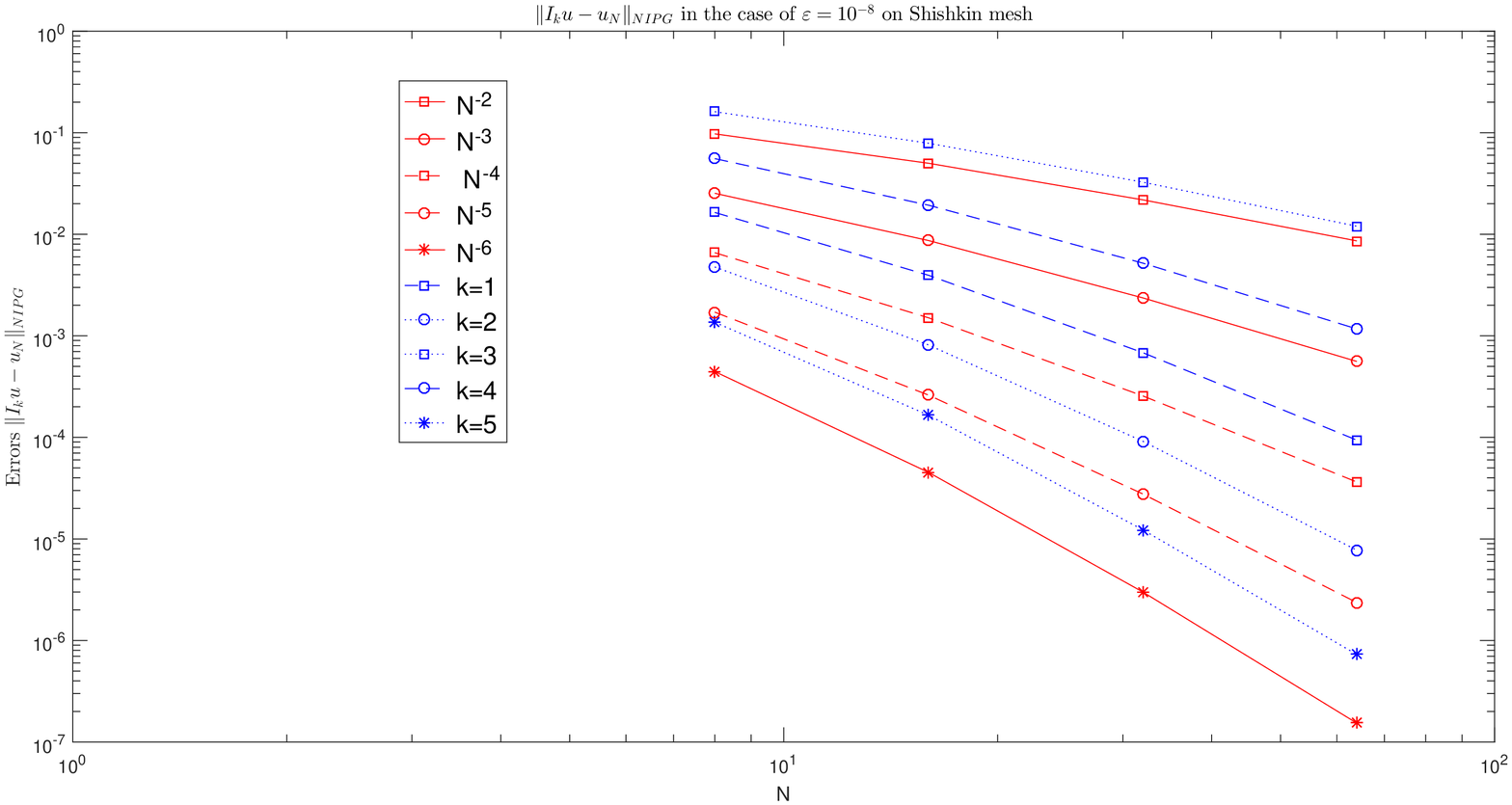}\\
\includegraphics[width=1.2\textwidth]{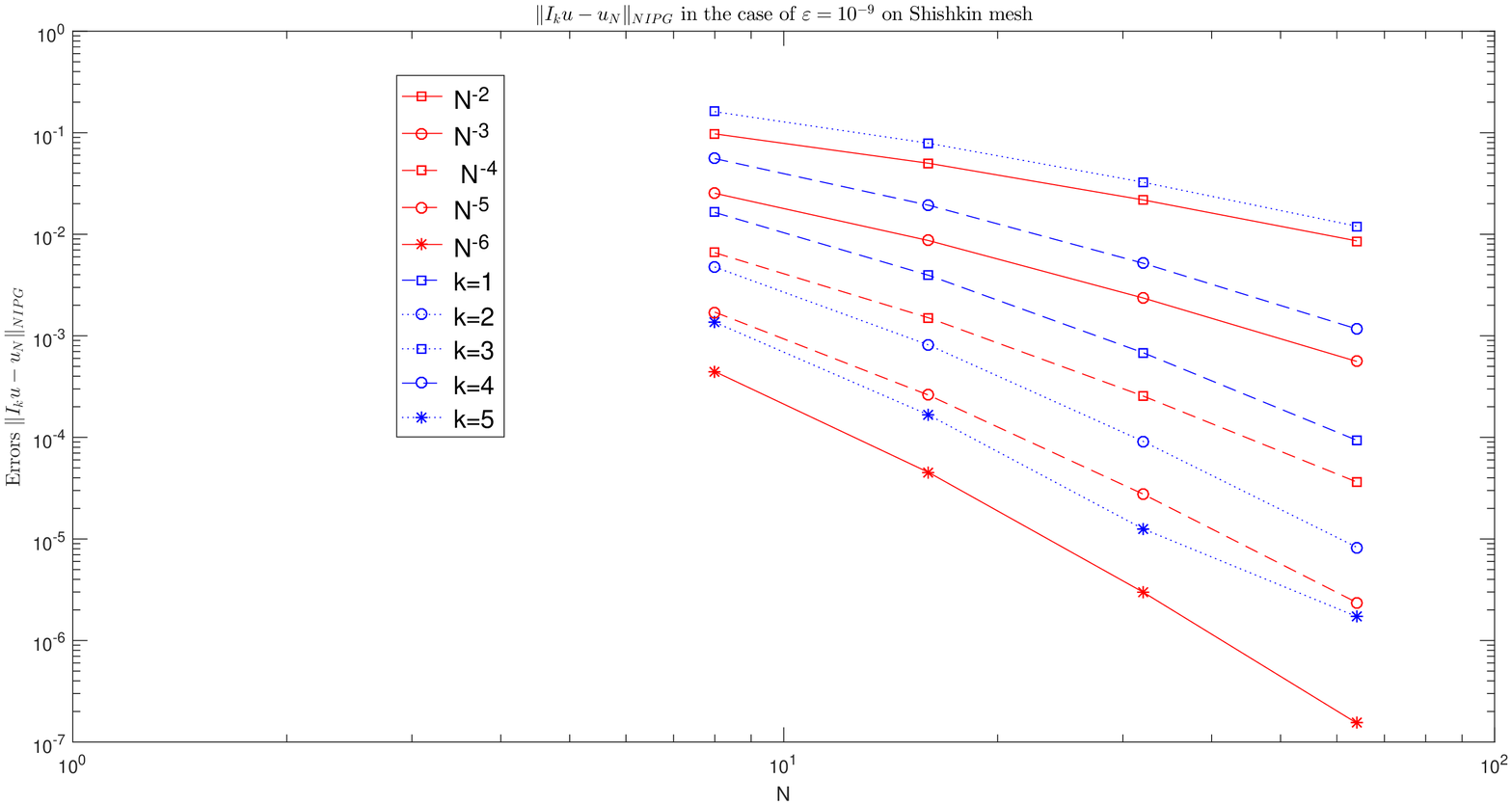}\\
\includegraphics[width=1.2\textwidth]{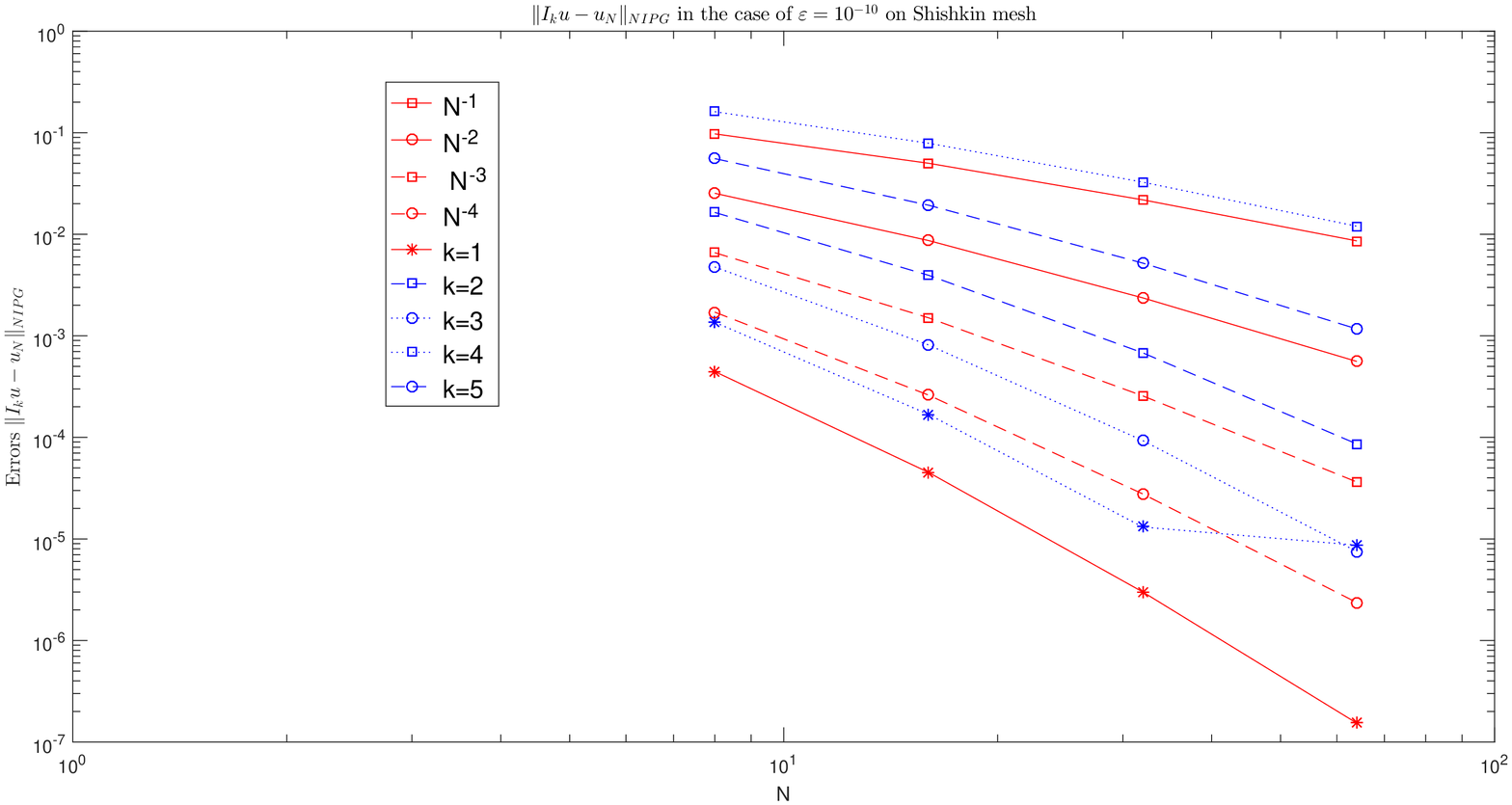}\\
\includegraphics[width=1.2\textwidth]{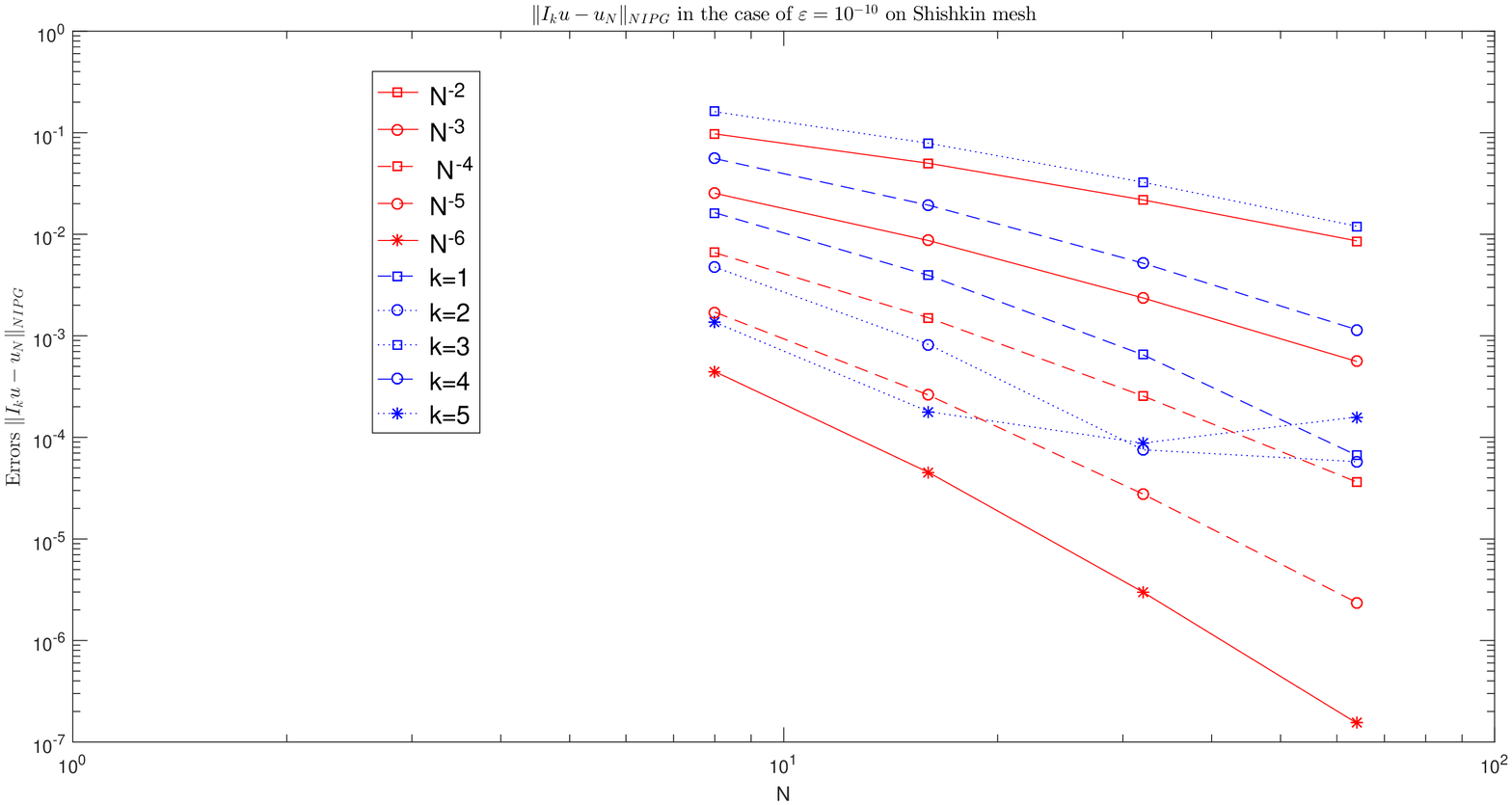}

Furthermore, we observe that the numerical results are unstable when the the degree of polynomial $k$ or the number of interval $N$ is large. One possible reason is that the linear system becomes ill conditioned as $\varepsilon\rightarrow 0$ or $k$ and $N$ become large. The details are shown in the figures above.
%

\section{Declarations}
 \subsection{Funding}
This research is supported by   
National Natural Science Foundation of China (11771257,11601251).
 \subsection{Data availability statement}
The authors confirm that the data supporting the findings of this study are available within the article and its supplementary materials.

\subsection{Conflict of interests}
The authors declare that they have no conflict of interest.


\begin{thebibliography}{10}

\bibitem{Che1Shu2:2010-S}
Y.~Cheng and C.~Shu.
\newblock Superconvergence of discontinuous Galerkin and local discontinuous
  Galerkin schemes for linear hyperbolic and convection-diffusion equations in
  one space dimension.
\newblock {\em SIAM J. Numer. Anal.}, 47(6):4044--4072, 2010.

\bibitem{cia1:2002-modified}
P.~G. Ciarlet.
\newblock {\em The Finite Element Method for Elliptic Problems}, volume~40 of
  {\em Classics in Applied Mathematics}.
\newblock SIAM, Philadelphia, PA, 2002.


\bibitem{Far1Heg2Mil3:2000-R}
P.~Farrell, A.~Hegarty, J.~M. Miller, E.~O'Riordan, and G.~I. Shishkin.
\newblock {\em Robust Computational Techniques for Boundary Layers}, volume~16
  of {\em Applied Mathematics (Boca Raton)}.
\newblock Chapman \& Hall/CRC, Boca Raton, FL, 2000.

\bibitem{Lin1:2003-L}
T.~Lin{\ss}.
\newblock Layer-adapted meshes for convection-diffusion problems.
\newblock {\em Comput. Methods Appl. Mech. Engrg.}, 192(9-10):1061--1105, 2003.

\bibitem{Lin1Sty2:2001-N}
T.~Lin{\ss} and M.~Stynes.
\newblock Numerical methods on Shishkin meshes for linear convection-diffusion
  problems.
\newblock {\em Comput. Methods Appl. Mech. Engrg.}, 190(28):3527--3542, 2001.

\bibitem{Mil1OPi2Shi3:1997-F}
J.~J.~H. Miller, E.~O'Riordan, and G.~I. Shishkin.
\newblock {\em Fitted Numerical Methods for Singular Perturbation Problems}.
\newblock World Scientific Publishing Co., Inc., River Edge, NJ, 1996.

\bibitem{Roo1Sty2Tob3:2008-R}
H.-G. Roos, M.~Stynes, and L.~Tobiska.
\newblock {\em Robust Numerical Methods for Singularly Perturbed Differential
  Equations}, volume~24 of {\em Springer Series in Computational Mathematics}.
\newblock Springer-Verlag, Berlin, second edition, 2008.

\bibitem{Roo1Zar2-2003:T}
H.-G. Roos and H.~Zarin.
\newblock The discontinuous Galerkin finite element method for singularly
  perturbed problems.
\newblock {\em Lect. Notes Comput. Sci. Eng.}, 35:246--267, 2003.

\bibitem{Roo1Zar2:2007-motified}
H.-G. Roos and H.~Zarin.
\newblock A supercloseness result for the discontinuous Galerkin stabilization
  of convection--diffusion problems on Shishkin meshes.
\newblock {\em Numer. Methods Partial Differential Equations},
  23(6):1560--1576, 2007.

\bibitem{Wh21:1978-motified}
M.~F. Wheeler.
\newblock An elliptic collocation-finite element method with interior
  penalties.
\newblock {\em SIAM J. Numer. Anal.}, 15(1):152--161, 1978.

\bibitem{Zar1Roo2:2005-I}
H.~Zarin and H.-G. Roos.
\newblock Interior penalty discontinuous approximations of convection-diffusion
  problems with parabolic layers.
\newblock {\em Numer. Math.}, 100(4):735--759, 2005.

\bibitem{Zha1Ma2Yan3:2021-F}
J.~Zhang, X.~Ma, and Y.~Lv.
\newblock Finite element method on {S}hishkin mesh for a singularly perturbed
  problem with an interior layer.
\newblock {\em Appl. Math. Lett.}, 121:Paper No. 107509, 8, 2021.

\bibitem{Zha1:2002-F}
Z.~Zhang.
\newblock Finite element superconvergence approximation for one-dimensional
  singularly perturbed problems.
\newblock {\em Numer. Methods Partial Differential Equations}, 18(3):374--395,
  2002.

\bibitem{Zhu1Xie2Zho3:2011-motified}
P.~Zhu, Z.~Xie, and S.~Zhou.
\newblock A coupled continuous-discontinuous {FEM} approach for convection
  diffusion equations.
\newblock {\em Acta Math. Sci. Ser. B (Engl. Ed.)}, 31(2):601--612, 2011.

\bibitem{Zhy1Yan2Yin3:2015-H}
P.~Zhu, Y.~Yang, and Y.~Yin.
\newblock Higher order uniformly convergent NIPG methods for 1-d singularly
  perturbed problems of convection--diffusion type.
\newblock {\em Appl. Math. Model.}, 39(22):6806--6816, 2015.

\end{thebibliography}
\end{document}